\newtheorem{lemma}{Lemma}
\newtheorem{theorem}{Theorem} 
\newtheorem{corollary}{Corollary} 
\newtheorem{definition}{Definition}
\newtheorem{claim}{Claim}
\newtheorem{conjecture}{Conjecture}
\newtheorem{remark}{Remark}
\newcommand{\floor}[1]{\left\lfloor{#1}\right\rfloor}
\newcommand{\ceil}[1]{\left\lceil{#1}\right\rceil}
\newcommand{\RNum}[1]{\uppercase\expandafter{\romannumeral #1\relax}}
\DeclareMathOperator{\ex}{ex}
\DeclareMathOperator{\EX}{EX}
\title{The Tur\'an Number of the Triangular Pyramid of $3$-Layers}
\author[1,2]{\hspace{1cm}Debarun Ghosh}
\author[1,2]{Ervin Gy\H{o}ri} 
\author[1,2]{Addisu Paulos}
\author[1,2]{\newline Chuanqi Xiao}
\author[3]{Oscar Zamora}
\affil[1]{Central European University, Budapest\par
\texttt{oscarz93@yahoo.es, chuanqixm@gmail.com, ghosh\textunderscore debarun@phd.ceu.edu,addisu\textunderscore 2004@yahoo.com}}
\affil[2]{Alfr\'ed R\'enyi Institute of Mathematics, Budapest \par
\texttt{gyori.ervin@renyi.mta.hu}}
\affil[3]{Universidad de Costa Rica, San Jos\'e}
\date{}
\begin{document}
\maketitle
\begin{abstract}
The Tur\'an number of a graph $H$, denoted by $\ex(n, H)$, is the maximum number of edges in an $n$-vertex graph that does not have $H$ as a subgraph. 
Let $TP_k$ be the triangular pyramid of $k$-layers.  In this paper, we determine that $\ex(n,TP_3)= \frac{1}{4}n^2+n+o(n)$ and pose a conjecture for $\ex(n,TP_4)$.

\end{abstract}
\section{Introduction}
The Tur\'an number of a graph $H$, denoted by $\text{ex}(n, H)$, is the maximum number of edges in an $n$-vertex graph that does not contain $H$ as a subgraph. Let $\text{EX}(n,H)$ denote the set of extremal graphs, i.e. the set of all $n$-vertex, $H$-free graph $G$ such that $e(G)=\text{ex}(n,H)$.

A systematic study of such type problems started after Tur\'an found and characterized $\text{EX}(n,K_{r+1})$.  The case $r=2$ was solved by Mantel in 1907. 
\begin{theorem}\cite{MAN}\label{mantel}
The maximum number of edges in an $n$-vertex triangle-free graph is $\floor{\frac{n^2}{4}}$. Furthermore, the only triangle-free graph with $\floor{\frac{n^2}{4}}$ edges is the complete  bipartite graph $K_{\floor{\frac{n}{2}}\ceil{\frac{n}{2}}}$.
\end{theorem}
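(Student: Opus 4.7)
The plan is to establish the extremal bound by a double-counting argument using pairwise degree sums along edges, and then handle the uniqueness statement by reducing the extremal graph to a complete bipartite structure.

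For the upper bound, the key observation is that in any triangle-free graph $G$, adjacent vertices $u$ and $v$ cannot have a common neighbor, since such a neighbor together with $u,v$ would form a triangle. Hence $N(u)\cap N(v)=\emptyset$ whenever $uv\in E(G)$, which yields $d(u)+d(v)=|N(u)\cup N(v)|\le n$. Summing this inequality over all edges gives
\[
\sum_{uv\in E(G)} \bigl(d(u)+d(v)\bigr) \;=\; \sum_{v\in V(G)} d(v)^2 \;\le\; n\cdot e(G).
\]
By Cauchy--Schwarz, $\sum_v d(v)^2 \ge \bigl(\sum_v d(v)\bigr)^2/n = 4e(G)^2/n$. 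Combining the two inequalities yields $4e(G)^2/n \le n\cdot e(G)$, i.e.\ $e(G)\le n^2/4$; since $e(G)$ is an integer, $e(G)\le \lfloor n^2/4\rfloor$.

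For uniqueness, suppose $G$ is triangle-free with exactly $\lfloor n^2/4\rfloor$ edges. I would argue that $G$ must be bipartite with balanced parts. Fix any edge $uv$; then $N(u)$ and $N(v)$ are each independent (triangle-freeness) and disjoint, and tightness of the degree-sum bound forces $N(u)\cup N(v)$ to cover essentially all of $V\setminus\{u,v\}$. Setting $A:=N(v)$ and $B:=N(u)$, this identifies a bipartition into two independent sets covering $n$ or $n-1$ vertices, which (after accounting for any remaining vertex using a degree-regularity argument derived from the Cauchy--Schwarz equality case) shows $G\subseteq K_{|A|,|B|}$. Since $G$ has the maximum possible number of edges, we must have $G=K_{a,b}$ with $a+b=n$; and the function $a(n-a)$ is maximized at $a=\lfloor n/2\rfloor$, giving $G=K_{\lfloor n/2\rfloor,\lceil n/2\rceil}$.

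The main obstacle is the uniqueness step when $n$ is odd: Cauchy--Schwarz equality would demand constant degree $n/2$, which is impossible, so I can only conclude that degrees are nearly constant, and must rule out stray edges or missing edges in the putative bipartition. A cleaner alternative is to use Zykov symmetrization: for any two non-adjacent vertices $x,y$ with $d(x)\ge d(y)$, replace $y$ by a twin of $x$ (i.e.\ delete all edges at $y$ and join $y$ to $N(x)$). This preserves triangle-freeness and does not decrease $e(G)$, and iterating drives $G$ to a complete multipartite graph; triangle-freeness then forces exactly two parts, and optimizing the part sizes completes the argument.
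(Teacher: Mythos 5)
The paper does not prove this statement at all: it is Mantel's classical theorem, quoted as Theorem~\ref{mantel} with a citation to \cite{MAN} and used as a black box, so there is no in-paper argument to compare yours against. Judged on its own, your upper bound is correct and complete: $N(u)\cap N(v)=\emptyset$ for an edge $uv$ gives $d(u)+d(v)\le n$, summing over edges gives $\sum_v d(v)^2\le n\,e(G)$, and Cauchy--Schwarz closes it. This is the standard modern proof.

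The uniqueness half, however, is not finished in either of the two routes you sketch, and you partly admit this. In the first route the problem is exactly the one you flag: for odd $n$ the Cauchy--Schwarz chain is not tight, so you cannot conclude regularity, and ``nearly constant degrees'' plus ``rule out stray edges'' is not an argument. The gap is closable without regularity: from $\sum_{uv\in E}(d(u)+d(v))=\sum_v d(v)^2\ge 4e(G)^2/n$ and $e(G)=\lfloor n^2/4\rfloor$ one gets that the \emph{average} of $d(u)+d(v)$ over edges is at least $(n^2-1)/n>n-1$, so since each summand is an integer at most $n$, \emph{some} edge $uv$ satisfies $d(u)+d(v)=n$ exactly; for that edge $N(u)$ and $N(v)$ are disjoint independent sets covering all of $V$, hence $G$ is bipartite, and then $e(G)\le |A|\,|B|$ with equality only for the balanced complete bipartite graph. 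Your second route (Zykov symmetrization) also has a gap as stated: symmetrization shows the \emph{bound} is attained by a complete multipartite graph, but iterating it on an extremal $G$ does not by itself show $G$ \emph{was} complete bipartite. To get uniqueness that way you must add the step that if non-adjacency fails to be an equivalence relation in an extremal $G$ (i.e.\ there are $x,y,z$ with $xz\in E$, $xy\notin E$, $yz\notin E$), then a suitable replacement strictly increases the edge count, a contradiction. Either fix is short, but as written the uniqueness claim is not proved.
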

The Tur\'an graph, $T_r(n)$, is an $n$-vertex complete $r$-partite graph whose parts have as equal as possible sizes.   Precisely speaking, the graph has ($n\ \text{mod}\ r$) parts of size $\lceil{n/r}\rceil$ and $r-(n\ \text{mod}\ r)$ parts of size $\lfloor{n/r}\rfloor$. Denote $e(T_r(n))$ by $t_r(n)$. Tur\'an proved the following fundamental result in the study of extremal graph theory:

\begin{theorem}\cite{turan1941external}\label{turan}
For an $n$-vertex $K_{r+1}$-free graph $G$, $$e(G)\leq t_{r}(n),$$
and equality holds if and only if $G$ is the Tur\'an graph $T_r(n)$, i.e., \\ $\ex(n,K_{r+1})=t_r(n)$ and $\EX(n,K_{r+1})=T_r(n)$.
\end{theorem}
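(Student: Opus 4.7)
The plan is to prove Tur\'an's theorem via Zykov symmetrization, reducing the problem to complete multipartite graphs and then optimizing among those. The central tool is a symmetrization lemma: if $G$ is $K_{r+1}$-free and $u,v$ are non-adjacent vertices with $d(u) \ge d(v)$, then the graph $G'$ obtained from $G$ by deleting $v$ and adding a new vertex $v'$ with $N_{G'}(v') := N_G(u)$ is still $K_{r+1}$-free and satisfies $e(G') - e(G) = d(u)-d(v) \ge 0$. The $K_{r+1}$-freeness follows because any copy of $K_{r+1}$ containing $v'$ would, upon substituting $u$ for $v'$, give a copy in $G$ (legal because $uv' \notin E$).

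Armed with this, I would take an extremal $K_{r+1}$-free graph $G$ on $n$ vertices that additionally minimizes the number of distinct neighborhoods, and prove that non-adjacency in $G$ is transitive. Assuming for contradiction that $uv, vw \notin E$ but $uw \in E$, a short case split on which of $d(u),d(v),d(w)$ is largest lets me apply the lemma so as to either strictly increase the edge count (contradicting extremality of $G$) or produce another extremal graph with strictly fewer distinct neighborhoods (contradicting the choice of $G$). Consequently the complement of $G$ is a disjoint union of cliques, so $G$ itself is complete multipartite; and being $K_{r+1}$-free it has at most $r$ parts.

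It then remains to show that among $n$-vertex complete graphs with at most $r$ parts, $T_r(n)$ is the unique edge-maximizer. For this I would use a convexity/shifting argument: if two parts have sizes differing by at least two, relocating a vertex from the larger to the smaller strictly increases the number of edges, and if fewer than $r$ parts are nonempty then splitting a part of size $\ge 2$ also strictly increases edges. Tracking equality throughout yields both the bound $e(G) \le t_r(n)$ and the uniqueness of $T_r(n) \in \EX(n,K_{r+1})$.

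I expect the main obstacle to be the transitivity step: the symmetrization lemma must be applied in just the right direction in each case so as to strictly improve either $e(G)$ or the number of distinct neighborhoods, without accidentally creating a $K_{r+1}$. Carrying the uniqueness statement through every reduction adds a bookkeeping layer, since one must verify that each non-balanced or non-multipartite configuration admits a strictly improving move rather than merely a non-decreasing one.
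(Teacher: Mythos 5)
The paper does not prove this statement at all: Theorem~2 is Tur\'an's classical theorem, quoted with a citation to Tur\'an's 1941 paper and used as background, so there is no in-paper proof to compare against. Your outline is the standard Zykov symmetrization proof and is essentially correct: the duplication lemma is right (replacing a non-neighbour $v$ by a clone of $u$ preserves $K_{r+1}$-freeness and changes the edge count by $d(u)-d(v)$), forcing non-adjacency to be an equivalence relation shows an extremal graph is complete multipartite with at most $r$ parts, and the balancing/splitting step identifies $T_r(n)$ as the unique maximizer. The one place I would tighten is the transitivity step: your secondary parameter (number of distinct neighbourhoods) is not obviously monotone under a single cloning move when $d(u)=d(v)$, since $N(v)$ may already coincide with some other vertex's neighbourhood. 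The cleaner standard fix avoids the secondary parameter entirely: in the case $d(v)\ge d(u)$ and $d(v)\ge d(w)$ with $uw\in E$, clone $v$ twice to replace both $u$ and $w$; the edge count changes by $2d(v)-d(u)-d(w)+1\ge 1$ (the $+1$ coming from the edge $uw$ being deleted only once in the count $d(u)+d(w)$), so extremality is contradicted outright. With that adjustment the argument is complete and yields both the bound and the uniqueness of $T_r(n)$.
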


In 1966, Erd\H{o}s, Stone, and Simonovits determined the asymptotic value of $\ex(n, H)$, where $H$ is a non-bipartite graph. 

\begin{theorem}\cite{EDR1,EDR2} \label{ess}
Let $F$ be a non-bipartite graph. Then
$$\ex(n,H)=\left(1-\frac{1}{\chi(H)-1}\right){n\choose 2}+o(n^2),$$
where $\chi(H)$ denotes the chromatic number of $H$.
\end{theorem}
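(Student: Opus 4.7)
The plan is to prove matching upper and lower bounds, with the gap absorbed into the $o(n^2)$ term. For the lower bound, the natural witness is the Tur\'an graph $T_{\chi(H)-1}(n)$. Since it is $(\chi(H)-1)$-partite, it is $(\chi(H)-1)$-colorable and so cannot contain any graph of chromatic number $\chi(H)$; in particular, it is $H$-free. By Theorem~\ref{turan}, its edge count is $t_{\chi(H)-1}(n) = \bigl(1 - \tfrac{1}{\chi(H)-1}\bigr)\binom{n}{2} - O(n)$, yielding the lower bound.

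For the upper bound, I would invoke the Erd\H{o}s--Stone theorem: for every $r \geq 1$, $t \geq 1$, and $\epsilon > 0$ there is $n_0 = n_0(r,t,\epsilon)$ such that every graph on $n \geq n_0$ vertices with at least $\bigl(1 - \tfrac{1}{r} + \epsilon\bigr)\binom{n}{2}$ edges contains the balanced complete $(r{+}1)$-partite graph $K_{r+1}(t,\ldots,t)$. Apply this with $r = \chi(H)-1$ and $t = |V(H)|$: fixing a proper $\chi(H)$-coloring of $H$ and mapping each color class injectively into a distinct part of $K_{\chi(H)}(|V(H)|,\ldots,|V(H)|)$ shows $H \subseteq K_{\chi(H)}(|V(H)|,\ldots,|V(H)|)$. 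Hence any $H$-free $n$-vertex graph has at most $\bigl(1 - \tfrac{1}{\chi(H)-1} + \epsilon\bigr)\binom{n}{2}$ edges for $n$ large; letting $\epsilon \to 0$ (slowly as $n \to \infty$) yields the upper bound.

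The main obstacle is of course the Erd\H{o}s--Stone theorem itself, which I would prove by induction on $r$. The base case $r=1$ of finding $K_{t,t}$ in any graph of positive edge density reduces to the K\H{o}v\'ari--S\'os--Tur\'an theorem. For the inductive step, first delete vertices of degree below $\bigl(1 - \tfrac{1}{r} + \tfrac{\epsilon}{2}\bigr)(n-1)$; a simple counting argument shows this changes the density only negligibly. Apply the inductive hypothesis to the surviving graph to locate a copy of $K_r(T,\ldots,T)$ for some $T \gg t$, and then extract an additional part of size $t$ whose vertices are joined to $t$ vertices from each of the existing $r$ parts, by pigeonholing over the common neighborhoods of $t$-subsets taken from each part while exploiting that every surviving vertex has high degree. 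This produces the required $K_{r+1}(t,\ldots,t)$ and completes the induction.
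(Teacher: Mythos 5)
This is a classical result that the paper merely cites from Erd\H{o}s--Stone and Erd\H{o}s--Simonovits and does not prove, so there is no in-paper argument to compare against. Your sketch is the standard textbook proof and is correct: the Tur\'an graph $T_{\chi(H)-1}(n)$, being $(\chi(H)-1)$-colorable, supplies the lower bound, and the Erd\H{o}s--Stone theorem together with the embedding $H\subseteq K_{\chi(H)}(|V(H)|,\dots,|V(H)|)$ supplies the upper bound, with the induction-on-$r$ proof of Erd\H{o}s--Stone (K\H{o}v\'ari--S\'os--Tur\'an base case, minimum-degree cleaning, pigeonhole over $t$-subsets of the parts of $K_r(T,\dots,T)$) being the usual route.
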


\begin{definition}
The Triangular Pyramid with $k$ layers, denoted by $TP_k$, is defined as follows: Draw $k+1$ paths in layers such that the first layer is a $1$-vertex path, the second layer is a $2$-vertex path,\dots, and the $(k+1)^{st}$ layer is a $(k+1)$-vertex path. Label the vertices from left to right of the $i^{th}$ layer's path as $x_1^{i},x_2^{i},\dots,x_i^{i}$, where $i\in\{1,2,3,\dots, k+1\}$. 
The vertex set of the graph $TP_k$ is the set of all vertices of the $(k+1)$ paths. The edge set contains all the edges of the paths.  Additionally, for any two consecutive $(i-1)^{th}$ and $i^{th}$ layer, $x_r^{i-1}x_r^{i}$ and $x_r^{i-1}x_{r+1}^{i}$ are in $E(TP_k)$, where $i\in\{1,2,\dots,k+1\}$ and $1\leq r\leq i-1$ (see Figure \ref{PT3PT5}).
\end{definition}

\begin{figure}[h]
\centering
\begin{tikzpicture}[scale=0.4]
\draw[fill=black](0,0)circle(6pt);
\draw[fill=black](-2,-2)circle(6pt);
\draw[fill=black](2,-2)circle(6pt);
\draw[fill=black](-4,-4)circle(6pt);
\draw[fill=black](0,-4)circle(6pt);
\draw[fill=black](4,-4)circle(6pt);
\draw[fill=black](-6,-6)circle(6pt);
\draw[fill=black](-2,-6)circle(6pt);
\draw[fill=black](2,-6)circle(6pt);
\draw[fill=black](6,-6)circle(6pt);
\draw[thick](0,0)--(-6,-6)(2,-2)--(-2,-6)(4,-4)--(2,-6)(0,0)--(6,-6)(-2,-2)--(2,-6)(-4,-4)--(-2,-6);
\draw[thick](-2,-2)--(2,-2)(-4,-4)--(4,-4)(-6,-6)--(6,-6);
\node at (0,-9) {$TP_3$};
\node at (0,1) {$x_1^{1}$};
\node at (-3,-1.8) {$x_1^{2}$};
\node at (3,-1.8) {$x_2^{2}$};
\node at (-5,-3.8) {$x_1^{3}$};
\node at (0,-3) {$x_2^{3}$};
\node at (5,-3.8) {$x_3^{3}$};
\node at (-7,-6.2) {$x_1^{4}$};
\node at (-2,-6.8) {$x_2^{4}$};
\node at (2,-6.8) {$x_2^{4}$};
\node at (7,-6.2) {$x_4^{4}$};

\end{tikzpicture} \qquad\qquad\qquad
\begin{tikzpicture}[scale=0.3]
\draw[fill=black](0,0)circle(7pt);
\draw[fill=black](-2,-2)circle(7pt);
\draw[fill=black](2,-2)circle(7pt);
\draw[fill=black](-4,-4)circle(7pt);
\draw[fill=black](0,-4)circle(7pt);
\draw[fill=black](4,-4)circle(7pt);
\draw[fill=black](-6,-6)circle(7pt);
\draw[fill=black](-2,-6)circle(7pt);
\draw[fill=black](2,-6)circle(7pt);
\draw[fill=black](6,-6)circle(7pt);
\draw[fill=black](-8,-8)circle(7pt);
\draw[fill=black](-4,-8)circle(7pt);
\draw[fill=black](0,-8)circle(7pt);
\draw[fill=black](4,-8)circle(7pt);
\draw[fill=black](8,-8)circle(7pt);
\draw[fill=black](-10,-10)circle(7pt);
\draw[fill=black](-6,-10)circle(7pt);
\draw[fill=black](-2,-10)circle(7pt);
\draw[fill=black](2,-10)circle(7pt);
\draw[fill=black](6,-10)circle(7pt);
\draw[fill=black](10,-10)circle(7pt);
\draw[thick](0,0)--(-10,-10)(2,-2)--(-6,-10)(4,-4)--(-2,-10)(6,-6)--(2,-10)(8,-8)--(6,-10);
\draw[thick](0,0)--(10,-10)(-2,-2)--(6,-10)(-4,-4)--(2,-10)(-6,-6)--(-2,-10)(-8,-8)--(-6,-10);
\draw[thick](-2,-2)--(2,-2)(-4,-4)--(4,-4)(-6,-6)--(6,-6)(-8,-8)--(8,-8)(-10,-10)--(10,-10);
\node at (0,-13) {$TP_5$};
\end{tikzpicture}
\caption{Triangular Pyramids with $3$ and $5$ layers respectively.}
\label{PT3PT5}
\end{figure}
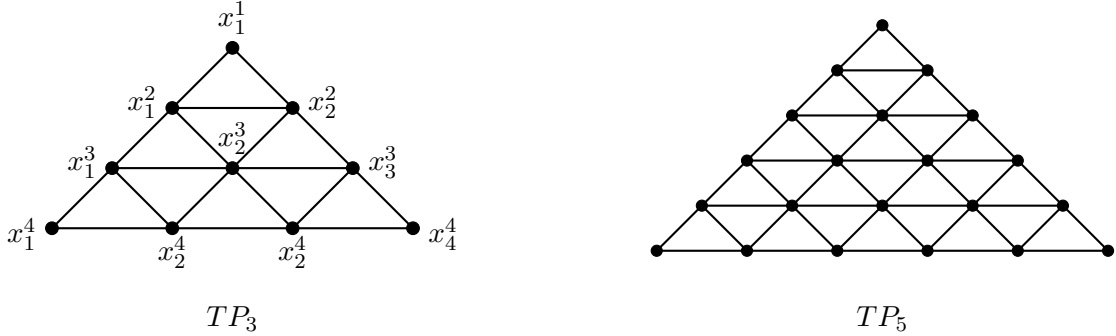

For $k\geq 1$, the chromatic number of $TP_{k}$ is $3$.  Hence by Theorem \ref{ess}, we have $\ex(n,TP_{k})=\frac{n^{2}}{4}+o(n^{2})$. Yet, it remains interesting to determine the exact value of $\ex(n,TP_{k})$. The graph $TP_1$ is a triangle and by Mantel's Theorem, $\ex(n,TP_1)=\floor{\frac{n^2}{4}}$. The graph $TP_2$ denotes the flattened tetrahedron. Liu \cite{LIU} determined  $\ex(n,TP_2)$ for sufficiently large values of $n$. Later, C. Xiao, G. O.H. Katona, J. Xiao, and O. Zamora~\cite{XIAO}  determined $\ex(n,TP_2)$ for small values of $n$.

\begin{theorem}\cite{XIAO}
The maximum number of edges in an n-vertex $TP_2$-free graph ($n\neq 5$) is,
$$ \ex(n,TP_2)=\left\{
\begin{aligned}
&\left\lfloor\frac{n^{2}}{4}\right\rfloor+\left\lfloor\frac{n}{2}\right\rfloor,& n\not\equiv2~(\bmod~4), \\
&\frac{n^{2}}{4}+\frac{n}{2}-1, & n\equiv2~(\bmod~4).
\end{aligned}
\right.
$$
\end{theorem}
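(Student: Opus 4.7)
The theorem has two directions: a lower bound (construction) and an upper bound.

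For the lower bound, I would produce, for each $n \ne 5$, an $n$-vertex $TP_2$-free graph whose edge count matches the claimed value. The key structural observation is that no vertex of $TP_2$ lies in all four of its triangles --- each vertex of $TP_2$ lies in at most three of its triangles --- so any host graph whose triangles all share a common vertex is automatically $TP_2$-free. A natural template is the Tur\'an graph $T_2(n)$ together with a spanning star added to one part. For $n$ odd (star centre in the larger part, of size $\lceil n/2 \rceil$), this contributes exactly $\lfloor n/2 \rfloor$ extra internal edges and matches the formula. For $n \equiv 2 \pmod 4$, the same template yields $n/2 - 1$ extra edges, again matching. The residual case $n \equiv 0 \pmod 4$ demands one further edge, which I would achieve by coupling the spanning star with one additional carefully placed edge; although the triangle cover then grows to size $2$, the construction still forbids $TP_2$, because the triangle-cover profile of $TP_2$ does not match the resulting pattern. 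Verification is a finite case analysis on the 6 vertices of a putative $TP_2$-embedding.

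For the upper bound, I would argue in three steps. (i) By the Erd\H{o}s--Simonovits stability theorem, since $\chi(TP_2) = 3$, any $TP_2$-free graph $G$ with $e(G) \ge n^2/4 + \Omega(n)$ admits a bipartition $V(G) = A \cup B$ with $|A|, |B| = (1/2 \pm o(1))n$ and $e(G[A]) + e(G[B]) = o(n^2)$; I would choose the partition maximising the cut. (ii) A standard minimum-degree cleaning step (iteratively remove vertices of degree below $n/2 - \sqrt{n}$) produces a subgraph $G'$ with $\delta(G') \ge n/2 - o(n)$ at a negligible loss in edges. (iii) Prove the sharp estimate $e(G'[A]) + e(G'[B]) \le \lfloor n/2 \rfloor$ (respectively $n/2 - 1$ when $n \equiv 2 \pmod 4$) by a local embedding argument: any internal edge $uv$ has common neighbourhood of size $(1-o(1))n/2$ on the opposite side, so together with two common neighbours it already realises the book $B_2$ (the 5-edge subgraph of $TP_2$ spanned by $x_1^1, x_1^2, x_2^2, x_2^3$); any further internal edge placed in a suitably ``independent'' position extends this $B_2$ into a full $TP_2$ using two additional common neighbours as images of $x_1^3$ and $x_3^3$. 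This forces the internal edges to concentrate around a single ``apex'', i.e.\ into an essentially star-like configuration, whose size is controlled by the formulas.

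The principal obstacle is the sharp structural estimate in step (iii), together with the $n \equiv 0 \pmod 4$ construction. Identifying the extremal internal configuration requires a detailed case analysis branching on how the internal edges interact with the dense bipartite structure, and the modular distinction between the two formulas reflects parity constraints on the extremal star configuration when the two part sizes have different parities.
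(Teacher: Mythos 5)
This theorem is not proved in the present paper; it is quoted from \cite{XIAO} (which in turn completes Liu's result for large $n$), so your proposal can only be judged on its own terms. The decisive gap is your treatment of $n\equiv 0\pmod 4$, and it infects both directions. Your lower-bound plan --- a spanning star inside one part of $T_2(n)$ plus ``one additional carefully placed edge'' --- cannot yield a $TP_2$-free graph once $n\geq 7$. If the extra edge joins two leaves $a,a'$ of the star with centre $u$, the triangle $uaa'$ lies inside one part, and any three vertices of the other part serve as the three outer vertices of a $TP_2$ (each adjacent to a distinct pair of the central triangle). If instead the extra edge $b_1b_2$ sits in the opposite part, take leaves $a,a'$ and any further $b_3$: the triangle $uab_1$ together with $a'$ (common neighbour of $u,b_1$), $b_2$ (common neighbour of $a,b_1$) and $b_3$ (common neighbour of $u,a$) is a $TP_2$. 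The correct extremal configuration for $n\equiv 0\pmod 4$ is $K_{n/2,n/2}$ plus a perfect matching inside \emph{each} part, giving $n^2/4+2\cdot\tfrac{n}{4}$ edges; it is $TP_2$-free because for a triangle $a_1a_2b$ with $a_1a_2$ a matching edge, each of the pairs $\{a_1,b\}$ and $\{a_2,b\}$ has exactly one admissible common neighbour, namely the matching-partner of $b$, so the three outer vertices cannot be chosen distinct. Note that the triangles of this graph admit no bounded transversal, so your guiding principle (all triangles through one vertex, or a cover of size $2$) simply does not see the extremal example; the $\bmod\ 4$ distinction in the statement is precisely the parity obstruction to a perfect matching on $n/2$ vertices.

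The same misconception undermines step (iii) of your upper bound: the internal edges of an extremal graph do \emph{not} concentrate around a single apex. The sharp local fact is that $K_{A,B}$ plus internal graphs $H_A,H_B$ contains a $TP_2$ exactly when some $H_X$ contains a triangle or a path on four vertices, or when one of $H_A,H_B$ has a vertex of degree at least $2$ while the other contains an edge; the admissible maxima are therefore either a single spanning star on one side or a matching on each side, and any correct structural analysis must branch into these two outcomes (which is where the two formulas come from). In particular your heuristic that ``any further internal edge placed in a suitably independent position'' completes a $TP_2$ is false: two independent edges in the same part never do. Finally, a stability-based argument is inherently limited to large $n$, whereas the theorem is asserted for all $n\neq 5$; handling small $n$ is exactly the contribution of \cite{XIAO} and would require a separate, non-asymptotic induction.
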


In this paper, we study the Tur\'an number for $TP_{3}$, i.e. the Triangular Pyramid with three layers.

\begin{theorem}\label{maintheorem}
The maximum number of edges in an $n$-vertex $TP_3$-free graph is,
$$\ex(n,TP_3)= \frac{1}{4}n^2+n+o(n).$$
\end{theorem}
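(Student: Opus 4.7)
I would construct a $TP_3$-free graph with $\tfrac14 n^2 + n - o(n)$ edges. Starting from the balanced complete bipartite graph $K_{\lfloor n/2\rfloor,\lceil n/2\rceil}$ with parts $A,B$, one adds about $n$ internal edges forming a carefully designed sparse structure on each side. The key requirement is to block every $2$-partition of $V(TP_3)$ whose internal substructure could be realized inside $G[A]\cup G[B]$; in particular, the odd/even-layer bipartition demands a $P_3$ on one side and a disjoint $P_2\cup P_4$ on the other, while the single-color-class bipartition demands a $C_6$ on one side and an independent $4$-set on the other. Verifying $TP_3$-freeness proceeds by case analysis on how a hypothetical copy of $TP_3$ places its vertices relative to $(A,B)$: since $\chi(TP_3)=3$, any copy must use at least one internal edge, and the rigid structure of $TP_3$ combined with the sparsity of the chosen internal edges forbids all such placements.

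\textbf{Upper bound.} Let $G$ be an $n$-vertex $TP_3$-free graph. Since $\chi(TP_3)=3$, Theorem \ref{ess} and the Erd\H{o}s--Simonovits stability theorem yield a bipartition $V(G)=A\cup B$ with $|A|,|B|=(1+o(1))\tfrac{n}{2}$ and $e(G[A])+e(G[B])=o(n^2)$. I would then pass to the bipartition that maximises $e(A,B)$---so every vertex has at least as many cross-neighbours as same-side neighbours---and delete vertices of degree below $\tfrac{n}{2}-\varepsilon n$, losing only $o(n^2)$ edges. In the resulting cleaned graph $G'$, every vertex has $(1-o(1))\tfrac{n}{2}$ cross-neighbours.

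The heart of the argument is the following structural lemma: $e(G'[A])+e(G'[B])\le n+o(n)$. Combined with $|A||B|\le \tfrac14 n^2$, this yields the upper bound up to $o(n)$. To prove the lemma, I would argue contrapositively: assuming more than $n+\Omega(n)$ internal edges, I must exhibit a copy of $TP_3$ in $G'$. The plan is first to locate a suitable internal ``skeleton'' of $TP_3$---a $P_3$ paired with a disjoint $P_2\cup P_4$ across the two parts, or a $C_6$ paired with an independent $4$-set---and then to greedily complete the $12$ cross edges of the copy. The greedy step succeeds because every pair of skeleton vertices enjoys $(1-o(1))\tfrac{n}{2}$ common cross-neighbours, which is much larger than the total size of the partial embedding.

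\textbf{Main obstacle.} The delicate part is proving that more than $n$ internal edges force the appearance of some admissible skeleton. Configurations just below the threshold---for instance, a matching on one side together with short disjoint paths on the other---can pack close to $n$ internal edges while narrowly avoiding every skeleton type. The skeleton-forcing argument must therefore be quantitatively sharp, blending matching, path-covering, and cycle-avoidance bounds on each part, and exploiting the imbalance between how internal edges distribute over the two sides. This case analysis is the technical heart of the proof and is precisely what pins down the constant $1$ in the linear term.
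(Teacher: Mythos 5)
Your upper-bound strategy has the same overall shape as the paper's (stability, a max cut $A\cup B$, a degree clean-up, then the claim that more than $n+\Omega(n)$ internal edges force a copy of $TP_3$ built from an internal ``skeleton'' plus cross edges), but the execution has a decisive gap: your enumeration of admissible skeletons is incomplete, and the missing type is exactly the one on which the whole difficulty rests. Every upward triangle of $TP_3$ is edge-disjoint from the others, so any bipartition of $V(TP_3)$ leaves at least $6$ internal edges; besides your two types ($P_3$ versus $P_2\cup P_4$ from the odd/even layers, and $C_6$ versus an independent $4$-set from a colour class), the max cut is also attained by $\{x_2^3,x_1^2,x_3^3,x_2^4\}$ versus its complement, whose internal traces are a claw $K_{1,3}$ on one side and \emph{three disjoint edges} on the other. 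This is not a spurious extra case: let $G$ be a balanced complete bipartite graph with vertex-disjoint $K_5$'s packed into $A$ (about $n$ internal edges) and a perfect matching inside $B$ (about $n/4$ more). Then the internal edge count is about $\tfrac54 n$, yet $B$ contains no $P_3$, no $P_4$ and no $C_6$, and $A$ contains no $C_6$ either, so neither of your two skeletons can be placed with its pieces on the required opposite sides --- but $G$ does contain a $TP_3$ (a claw inside one $K_5$ joined to three pairwise unmatched vertices of $B$, with their three matched partners supplying the pendant triangles). So the forcing lemma you plan to prove is false for the two skeleton types you list. The paper's proof is organised precisely around the claw-plus-triple configuration: it forces a \emph{sparse $7$-wheel} (an induced $K_{1,3}$ in one part completely joined to an independent triple in the other, Lemma~\ref{lm3} and the final lemma) and then attaches the remaining three vertices.

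Two further problems. First, your greedy completion is not well posed: the skeleton already fixes all ten vertices, so there is nothing left to choose greedily --- the $12$ cross edges between named vertices either exist or they do not. A correct argument must leave some vertices free and select them from common neighbourhoods (the paper keeps the three pendant vertices free and produces them from the property, secured in Lemma~\ref{lemma2}, that every edge of the cleaned graph lies in at least three triangles), and must count the number of candidate skeletons against the $o(n^2)$ missing cross edges to find one fully joined copy; this in turn forces a separate treatment of vertices of linear internal degree, which the paper isolates in Lemma~\ref{lm3} and which you do not address. Second, a clean-up that ``loses only $o(n^2)$ edges'' is useless for a bound you need accurate to $O(n)$: the deletion has to be iterated with exact bookkeeping showing the surplus $(1+\delta)n$ survives, as in Lemma~\ref{lemma2}. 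Finally, your lower bound states desiderata but exhibits no graph; the paper's extremal constructions are explicit (disjoint $K_5$'s on one slightly larger side of a complete bipartite graph, or disjoint triangles on both sides), and your list of structures to block would in any case have to include the claw-versus-three-disjoint-edges split above.
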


It can be checked that the constructions given in Figure \ref{fig1}, \ref{fig2} and \ref{fig3} are $TP_3$-free graphs containing $\frac{1}{4}n^2+n+1$, $\frac{1}{4}n^2+n+\frac{3}{4}$ and $\frac{1}{4}n^2+n$ edges respectively. Thus, the bound in Theorem \ref{mainthm} is best possible in terms of the linear terms, for infinitely many $n$. 

\section{Notations}
All the graphs we consider in this paper are simple and finite. Let $G$ be a graph. We denote the set of vertices and edges of $G$ by $V(G)$ and $E(G)$ respectively. The number of edges and vertices is denoted by $e(G)$ and $v(G)$ respectively.  We denote the degree of a vertex $v$ by $d(v)$, the minimum degree in graph $G$ by $\delta(G)$, and the neighborhood of $v$ by $N(v)$ respectively. Let $H$ be a subgraph of $G$ and $v$ be a vertex in $H$. We denote the set of vertices that are adjacent to $v$ in $H$ by $N_H(v)$.  Let $x_1, x_2,\dots, x_k$ be $k$ vertices in $H$.   The set of vertices in $H$ which are adjacent to all these $k$ vertices, $x_1, x_2,\dots, x_k$, is denoted by $N^*_H(x_1,x_2,\dots,x_k)$.  For brevity, we may omit the subscript in the notation whenever the graph we are dealing with is clear.  Let $A$ and $B$ be subsets $V(G)$, then the number of edges between them is denoted by $e(A, B)$.  We denote the cycle of length $6$ (or simply a $6$ vertex cycle) by $C_6$ or $6$-cycle. A $7$-wheel, denoted by $W_7$, is a $7$-vertex graph containing a $C_6$ and a vertex that is adjacent to all vertices of the cycle. 
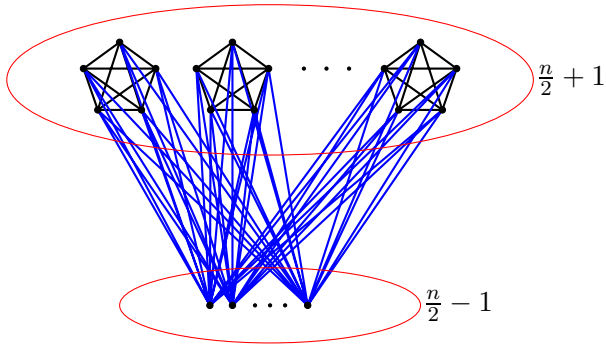
\begin{figure}[h]
\centering
\begin{tikzpicture}[scale=0.1]
\draw[thick](0,5)--(-4.8,1.5)--(-2.9,-4)--(2.9,-4)--(4.8,1.5)--(0,5);
\draw[thick](0,5)--(2.9,-4)(0,5)--(-2.9,-4)(2.9,-4)--(-4.8,1.5)(-2.9,-4)--(4.8,1.5)(4.8,1.5)--(-4.8,1.5);
\draw[thick](15,5)--(10.2,1.5)--(12.1,-4)--(17.9,-4)--(19.8,1.5)--(15,5);
\draw[thick](15,5)--(17.9,-4)(15,5)--(12.1,-4)(17.9,-4)--(10.2,1.5)(12.1,-4)--(19.8,1.5)(19.8,1.5)--(10.2,1.5);
\draw[thick](40,5)--(35.2,1.5)--(37.1,-4)--(42.9,-4)--(44.8,1.5)--(40,5);
\draw[thick](40,5)--(42.9,-4)(40,5)--(37.1,-4)(42.9,-4)--(35.2,1.5)(37.1,-4)--(44.8,1.5)(44.8,1.5)--(35.2,1.5);
\draw[thick,blue](-4.8,1.5)--(25,-30)(-4.8,1.5)--(15,-30)(-4.8,1.5)--(12,-30)(4.8,1.5)--(25,-30)(4.8,1.5)--(15,-30)(4.8,1.5)--(12,-30)(0,5)--(25,-30)(0,5)--(15,-30)(0,5)--(12,-30)(2.9,-4)--(25,-30)(2.9,-4)--(15,-30)(2.9,-4)--(12,-30)(-2.9,-4)--(25,-30)(-2.9,-4)--(15,-30)(-2.9,-4)--(12,-30);
\draw[thick,blue](10.2,1.5)--(25,-30)(10.2,1.5)--(15,-30)(10.2,1.5)--(12,-30)(19.8,1.5)--(25,-30)(19.8,1.5)--(15,-30)(19.8,1.5)--(12,-30)(15,5)--(25,-30)(15,5)--(15,-30)(15,5)--(12,-30)(17.9,-4)--(25,-30)(17.9,-4)--(15,-30)(17.9,-4)--(12,-30)(12.1,-4)--(25,-30)(12.1,-4)--(15,-30)(12.1,-4)--(12,-30);
\draw[thick,blue](35.2,1.5)--(25,-30)(35.2,1.5)--(15,-30)(35.2,1.5)--(12,-30)(40,5)--(25,-30)(40,5)--(15,-30)(40,5)--(12,-30)(44.8,1.5)--(25,-30)(44.8,1.5)--(15,-30)(44.8,1.5)--(12,-30)(42.9,-4)--(25,-30)(42.9,-4)--(15,-30)(42.9,-4)--(12,-30)(37.1,-4)--(25,-30)(37.1,-4)--(15,-30)(37.1,-4)--(12,-30);
\draw[rotate around={0:(20,0)},red] (20,0) ellipse (35 and 10);
\draw[rotate around={0:(20,-30)},red] (20,-30) ellipse (20 and 5);
\draw[fill=black](27.5,1.5)circle(6pt);
\draw[fill=black](30.5,1.5)circle(6pt);
\draw[fill=black](24.5,1.5)circle(6pt);
\draw[fill=black](20,-30)circle(6pt);
\draw[fill=black](22,-30)circle(6pt);
\draw[fill=black](18,-30)circle(6pt);
\draw[fill=black](0,5)circle(12pt);
\draw[fill=black](4.8,1.5)circle(12pt);
\draw[fill=black](-4.8,1.5)circle(12pt);
\draw[fill=black](-2.9,-4)circle(12pt);
\draw[fill=black](2.9,-4)circle(12pt);
\draw[fill=black](10.2,1.5)circle(12pt);
\draw[fill=black](15,5)circle(12pt);
\draw[fill=black](19.8,1.5)circle(12pt);
\draw[fill=black](17.9,-4)circle(12pt);
\draw[fill=black](12.1,-4)circle(12pt);
\draw[fill=black](35.2,1.5)circle(12pt);
\draw[fill=black](40,5)circle(12pt);
\draw[fill=black](44.8,1.5)circle(12pt);
\draw[fill=black](42.9,-4)circle(12pt);
\draw[fill=black](37.1,-4)circle(12pt);
\draw[fill=black](25,-30)circle(12pt);
\draw[fill=black](15,-30)circle(12pt);
\draw[fill=black](12,-30)circle(12pt);
\node at (60,0) {$\frac{n}{2}+1$};
\node at (45,-30) {$\frac{n}{2}-1$};
\end{tikzpicture}
\caption{Extremal construction when $n$ is even and $n\equiv 2(\text{mod }10)$.}
\label{fig1}
\end{figure}
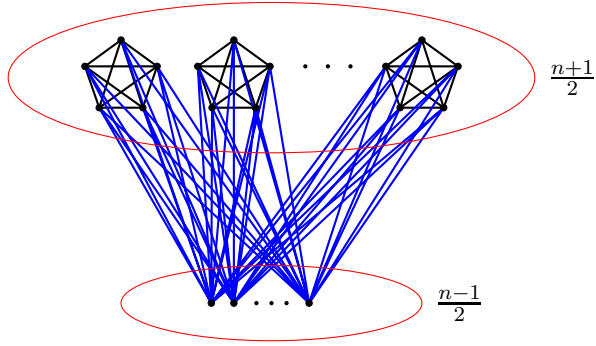
\begin{figure}[h]
\centering
\begin{tikzpicture}[scale=0.1]
\draw[thick](0,5)--(-4.8,1.5)--(-2.9,-4)--(2.9,-4)--(4.8,1.5)--(0,5);
\draw[thick](0,5)--(2.9,-4)(0,5)--(-2.9,-4)(2.9,-4)--(-4.8,1.5)(-2.9,-4)--(4.8,1.5)(4.8,1.5)--(-4.8,1.5);
\draw[thick](15,5)--(10.2,1.5)--(12.1,-4)--(17.9,-4)--(19.8,1.5)--(15,5);
\draw[thick](15,5)--(17.9,-4)(15,5)--(12.1,-4)(17.9,-4)--(10.2,1.5)(12.1,-4)--(19.8,1.5)(19.8,1.5)--(10.2,1.5);
\draw[thick](40,5)--(35.2,1.5)--(37.1,-4)--(42.9,-4)--(44.8,1.5)--(40,5);
\draw[thick](40,5)--(42.9,-4)(40,5)--(37.1,-4)(42.9,-4)--(35.2,1.5)(37.1,-4)--(44.8,1.5)(44.8,1.5)--(35.2,1.5);
\draw[thick,blue](-4.8,1.5)--(25,-30)(-4.8,1.5)--(15,-30)(-4.8,1.5)--(12,-30)(4.8,1.5)--(25,-30)(4.8,1.5)--(15,-30)(4.8,1.5)--(12,-30)(0,5)--(25,-30)(0,5)--(15,-30)(0,5)--(12,-30)(2.9,-4)--(25,-30)(2.9,-4)--(15,-30)(2.9,-4)--(12,-30)(-2.9,-4)--(25,-30)(-2.9,-4)--(15,-30)(-2.9,-4)--(12,-30);
\draw[thick,blue](10.2,1.5)--(25,-30)(10.2,1.5)--(15,-30)(10.2,1.5)--(12,-30)(19.8,1.5)--(25,-30)(19.8,1.5)--(15,-30)(19.8,1.5)--(12,-30)(15,5)--(25,-30)(15,5)--(15,-30)(15,5)--(12,-30)(17.9,-4)--(25,-30)(17.9,-4)--(15,-30)(17.9,-4)--(12,-30)(12.1,-4)--(25,-30)(12.1,-4)--(15,-30)(12.1,-4)--(12,-30);
\draw[thick,blue](35.2,1.5)--(25,-30)(35.2,1.5)--(15,-30)(35.2,1.5)--(12,-30)(40,5)--(25,-30)(40,5)--(15,-30)(40,5)--(12,-30)(44.8,1.5)--(25,-30)(44.8,1.5)--(15,-30)(44.8,1.5)--(12,-30)(42.9,-4)--(25,-30)(42.9,-4)--(15,-30)(42.9,-4)--(12,-30)(37.1,-4)--(25,-30)(37.1,-4)--(15,-30)(37.1,-4)--(12,-30);
\draw[rotate around={0:(20,0)},red] (20,0) ellipse (35 and 10);
\draw[rotate around={0:(20,-30)},red] (20,-30) ellipse (20 and 5);
\draw[fill=black](27.5,1.5)circle(6pt);
\draw[fill=black](30.5,1.5)circle(6pt);
\draw[fill=black](24.5,1.5)circle(6pt);
\draw[fill=black](20,-30)circle(6pt);
\draw[fill=black](22,-30)circle(6pt);
\draw[fill=black](18,-30)circle(6pt);
\draw[fill=black](0,5)circle(12pt);
\draw[fill=black](4.8,1.5)circle(12pt);
\draw[fill=black](-4.8,1.5)circle(12pt);
\draw[fill=black](-2.9,-4)circle(12pt);
\draw[fill=black](2.9,-4)circle(12pt);
\draw[fill=black](10.2,1.5)circle(12pt);
\draw[fill=black](15,5)circle(12pt);
\draw[fill=black](19.8,1.5)circle(12pt);
\draw[fill=black](17.9,-4)circle(12pt);
\draw[fill=black](12.1,-4)circle(12pt);
\draw[fill=black](35.2,1.5)circle(12pt);
\draw[fill=black](40,5)circle(12pt);
\draw[fill=black](44.8,1.5)circle(12pt);
\draw[fill=black](42.9,-4)circle(12pt);
\draw[fill=black](37.1,-4)circle(12pt);
\draw[fill=black](25,-30)circle(12pt);
\draw[fill=black](15,-30)circle(12pt);
\draw[fill=black](12,-30)circle(12pt);
\node at (60,0) {$\frac{n+1}{2}$};
\node at (45,-30) {$\frac{n-1}{2}$};
\end{tikzpicture}
\caption{Extremal construction when $n$ is odd and $n\equiv 1(\text{mod }10)$.}
\label{fig2}
\end{figure}
\begin{figure}[h]
\centering
\begin{tikzpicture}[scale=0.1]
\draw[rotate around={0:(0,0)},red] (0,0) ellipse (28 and 12);
\draw[rotate around={0:(0,-40)},red] (0,-40) ellipse (28 and 12);
\draw[ultra thick](0,5)--(4,0)--(-4,0)--(0,5)(-14,5)--(-10,0)--(-18,0)--(-14,5)(20,5)--(16,0)--(24,0)--(20,5);
\draw[ultra thick](0,-35)--(4,-40)--(-4,-40)--(0,-35)(-14,-35)--(-10,-40)--(-18,-40)--(-14,-35)(20,-35)--(16,-40)--(24,-40)--(20,-35);
\draw[thick,blue](0,5)--(0,-35)(0,5)--(4,-40)(0,5)--(-4,-40)(0,5)--(-14,-35)(0,5)--(-10,-40)(0,5)--(-18,-40)(0,5)--(20,-35)(0,5)--(16,-40)(0,5)--(24,-40);
\draw[thick,blue](4,0)--(0,-35)(4,0)--(4,-40)(4,0)--(-4,-40)(4,0)--(-14,-35)(4,0)--(-10,-40)(4,0)--(-18,-40)(4,0)--(20,-35)(4,0)--(16,-40)(4,0)--(24,-40);
\draw[thick,blue](-4,0)--(0,-35)(-4,0)--(4,-40)(-4,0)--(-4,-40)(-4,0)--(-14,-35)(-4,0)--(-10,-40)(-4,0)--(-18,-40)(-4,0)--(20,-35)(-4,0)--(16,-40)(-4,0)--(24,-40);
\draw[thick,blue](-14,5)--(0,-35)(-14,5)--(4,-40)(-14,5)--(-4,-40)(-14,5)--(-14,-35)(-14,5)--(-10,-40)(-14,5)--(-18,-40)(-14,5)--(20,-35)(-14,5)--(16,-40)(-14,5)--(24,-40);
\draw[thick,blue](-10,0)--(0,-35)(-10,0)--(4,-40)(-10,0)--(-4,-40)(-10,0)--(-14,-35)(-10,0)--(-10,-40)(-10,0)--(-18,-40)(-10,0)--(20,-35)(-10,0)--(16,-40)(-10,0)--(24,-40);
\draw[thick,blue](-18,0)--(0,-35)(-18,0)--(4,-40)(-18,0)--(-4,-40)(-18,0)--(-14,-35)(-18,0)--(-10,-40)(-18,0)--(-18,-40)(-18,0)--(20,-35)(-18,0)--(16,-40)(-18,0)--(24,-40);
\draw[thick,blue](20,5)--(0,-35)(20,5)--(4,-40)(20,5)--(-4,-40)(20,5)--(-14,-35)(20,5)--(-10,-40)(20,5)--(-18,-40)(20,5)--(20,-35)(20,5)--(16,-40)(20,5)--(24,-40);
\draw[thick,blue](16,0)--(0,-35)(16,0)--(4,-40)(16,0)--(-4,-40)(16,0)--(-14,-35)(16,0)--(-10,-40)(16,0)--(-18,-40)(16,0)--(20,-35)(16,0)--(16,-40)(16,0)--(24,-40);
\draw[thick,blue](24,0)--(0,-35)(24,0)--(4,-40)(24,0)--(-4,-40)(24,0)--(-14,-35)(24,0)--(-10,-40)(24,0)--(-18,-40)(24,0)--(20,-35)(24,0)--(16,-40)(24,0)--(24,-40);
\draw[fill=black](0,5)circle(15pt);
\draw[fill=black](4,0)circle(15pt);
\draw[fill=black](-4,0)circle(15pt);
\draw[fill=black](-14,5)circle(15pt);
\draw[fill=black](-10,0)circle(15pt);
\draw[fill=black](-18,0)circle(15pt);
\draw[fill=black](10,0)circle(8pt);
\draw[fill=black](13,0)circle(8pt);
\draw[fill=black](7,0)circle(8pt);
\draw[fill=black](20,5)circle(15pt);
\draw[fill=black](16,0)circle(15pt);
\draw[fill=black](24,0)circle(15pt);
\draw[fill=black](0,-35)circle(15pt);
\draw[fill=black](4,-40)circle(15pt);
\draw[fill=black](-4,-40)circle(15pt);
\draw[fill=black](-14,-35)circle(15pt);
\draw[fill=black](-10,-40)circle(15pt);
\draw[fill=black](-18,-40)circle(15pt);
\draw[fill=black](10,-40)circle(8pt);
\draw[fill=black](13,-40)circle(8pt);
\draw[fill=black](7,-40)circle(8pt);
\draw[fill=black](20,-35)circle(15pt);
\draw[fill=black](16,-40)circle(15pt);
\draw[fill=black](24,-40)circle(15pt);
\end{tikzpicture}
\caption{Extremal construction when $n$ is divisible by 6.}
\label{fig3}
\end{figure}
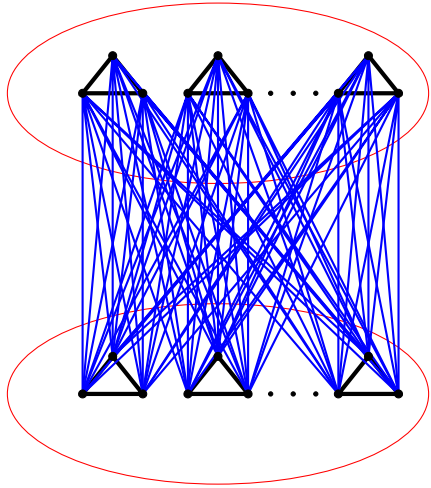

\section{Proof of Theorem \ref{maintheorem}}
We will be using the following  classical stability result of Erd\H os and Simonovits.
\begin{theorem}\cite{EDR3}
Let $k \geq 2$ and suppose that $H$ is a graph with $\chi(H) =k + 1$. If G is an H-free graph with $e(G) \geq t_k(n)-o(n^2)$, then G can be formed from $T_k(n)$ by adding and deleting $o(n^2)$ edges.
\end{theorem}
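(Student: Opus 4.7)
The plan is to deduce the general stability statement from its clique case via Szemer\'edi's regularity lemma. Fix $\epsilon>0$; I will show that if $\eta>0$ is small enough and $n$ is large enough, then every $H$-free graph $G$ on $n$ vertices with $e(G)\ge t_k(n)-\eta n^2$ differs from $T_k(n)$ in at most $\epsilon n^2$ edges, which is what the statement asserts once the $o$-notation quantifiers are unpacked.

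First, apply the regularity lemma to $G$ with regularity parameter $\epsilon'\ll\epsilon$ and density threshold $d\ll\epsilon'$, obtaining an equitable partition $V_0,V_1,\dots,V_m$ with $|V_0|\le\epsilon' n$ and all but at most $\epsilon'\binom{m}{2}$ pairs $(V_i,V_j)$ being $\epsilon'$-regular. Form the reduced graph $R$ on $[m]$ by joining $i,j$ when $(V_i,V_j)$ is $\epsilon'$-regular with density at least $d$. A standard accounting — edges inside some $V_i$, edges incident to $V_0$, edges in irregular pairs, and edges in low-density regular pairs each contribute $o(n^2)$ — gives $e(R)\cdot(n/m)^2\ge e(G)-o(n^2)$, hence $e(R)\ge t_k(m)-o(m^2)$.

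Next, I claim $R$ is $K_{k+1}$-free. If $R$ contained a clique on $\{i_1,\dots,i_{k+1}\}$, then since $\chi(H)=k+1$ I could distribute the color classes of a proper $(k+1)$-coloring of $H$ among $V_{i_1},\dots,V_{i_{k+1}}$ and apply the standard embedding lemma for $\epsilon'$-regular $(k+1)$-tuples of density at least $d$ to find a copy of $H$ in $G$, contradicting $H$-freeness (valid for $n$ large in terms of $|V(H)|$, $d$, and $\epsilon'$). Thus $R$ is $K_{k+1}$-free with $e(R)\ge t_k(m)-o(m^2)$. The Tur\'an stability theorem for cliques — every $K_{k+1}$-free graph on $m$ vertices with at least $t_k(m)-o(m^2)$ edges differs from $T_k(m)$ by $o(m^2)$ edges — then produces a partition $U_1,\dots,U_k$ of $[m]$ into parts of sizes $m/k+o(m)$ such that $R$ agrees with the complete $k$-partite graph on $(U_1,\dots,U_k)$ on all but $o(m^2)$ pairs.

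Finally, I lift this partition back to $G$ by setting $W_j=\bigcup_{i\in U_j}V_i$ and distributing $V_0$ arbitrarily. An edge of $G$ lies inside some $W_j$ only if it sits inside a single $V_i$, in an intra-$U_j$ regular pair, is incident to $V_0$, or lies in an irregular/low-density pair; all four categories total $o(n^2)$, and missing cross-edges between distinct $W_j$'s are bounded analogously, so $G$ differs from the complete $k$-partite graph on $(W_1,\dots,W_k)$ by $o(n^2)$ edges. A minor rebalancing of the $W_j$ to the exact Tur\'an part sizes moves $o(n)$ vertices and thus costs at most $o(n^2)$ additional edge modifications, yielding the theorem. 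The main obstacle is the base case, namely Tur\'an stability for $K_{k+1}$ itself, which must be established separately — for instance, by induction on $k$ using a maximum-degree vertex whose neighborhood is $K_k$-free, and then re-gluing while controlling the error. Care is also needed in ordering the regularity parameters $d\ll\epsilon'\ll\epsilon$ so that the various $o(n^2)$ error terms compose correctly and do not swamp the desired bound.
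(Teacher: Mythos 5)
The paper does not prove this statement at all: it is the Erd\H{o}s--Simonovits stability theorem, imported as a black box from \cite{EDR3} and merely restated (as Theorem \ref{stability}) in the special case $\chi(H)=3$ before being applied in Lemma \ref{lm3}. So there is no in-paper argument to compare yours against. Your derivation --- reduce to the clique case via Szemer\'edi's regularity lemma, show the reduced graph $R$ is $K_{k+1}$-free via the embedding lemma, apply clique stability to $R$, and pull the resulting $k$-partition back to $G$ --- is the standard modern proof and is essentially sound: the accounting giving $e(R)\ge t_k(m)-o(m^2)$, the $o(n^2)$ bound on intra-part edges after lifting, and the complementary count of missing cross-edges (which follows from $e(G)\ge t_k(n)-o(n^2)$ once intra-part edges are known to be negligible) all go through, as does the final rebalancing. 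For context, the original Erd\H{o}s--Simonovits arguments predate the regularity lemma and run by progressive induction and direct degree counting; your route is quicker to state but outsources the work to two substantial black boxes (regularity plus clique stability) instead of none.

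Two caveats. First, your parameter hierarchy is written backwards: you need $\epsilon'\ll d\ll\epsilon$, not $d\ll\epsilon'\ll\epsilon$. The embedding lemma requires the regularity parameter to be small relative to the density threshold, since the neighbourhood slices $N(v)\cap V_{i_j}$ have size roughly $d\,|V_{i_j}|$ and must exceed $\epsilon'|V_{i_j}|$ for regularity to control them; with $d\ll\epsilon'$ the embedding step as you invoke it fails. This is a transposition rather than a conceptual error, but it must be fixed. Second, the base case --- stability for $K_{k+1}$ itself --- carries essentially all the content of the theorem in this approach, and you only gesture at its proof; the route you name (a maximum-degree vertex whose neighbourhood is $K_k$-free, plus induction on $k$) can be made to work, but until it is carried out your argument is only a reduction of the theorem to its clique case, not a proof.
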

Since $\chi(TP_3)=3$, the above theorem can be restated as follows.
\begin{theorem}\label{stability}
For every $\gamma>0$, there exists an $\epsilon>0$ and $n_0(\gamma)$ such that for every $n$-vertex, $n>n_0(\gamma)$, and $TP_3$-free graph $G$ such that $e(G)\geq \frac{n^2}{4}-\epsilon n^2$, we have
$$|E(G)\Delta E(T_{2}(n))|\leq \gamma n^2.$$
\end{theorem}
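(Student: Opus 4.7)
The plan is to deduce this statement as an immediate specialization of the Erd\H{o}s--Simonovits stability theorem quoted just above to the case $H = TP_3$. There are only two things to verify.

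First, the chromatic-number hypothesis: we must have $\chi(TP_3) = 3$. The lower bound is immediate from the triangle $x_1^1 x_1^2 x_2^2 \subseteq TP_3$ (the edge $x_1^1 x_1^2$ is a vertical cross-edge, $x_1^1 x_2^2$ is a diagonal cross-edge, and $x_1^2 x_2^2$ lies within layer $2$). For the upper bound, the assignment $c(x_r^i) = (i+r) \bmod 3$ is a proper $3$-coloring of $TP_3$: within-layer edges $x_r^i x_{r+1}^i$ change $c$ by $1$, vertical cross-edges $x_r^{i-1} x_r^i$ change $c$ by $1$, and diagonal cross-edges $x_r^{i-1} x_{r+1}^i$ change $c$ by $2$, all nonzero modulo $3$. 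Hence we may apply the preceding theorem with $k = \chi(TP_3) - 1 = 2$.

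Second, we pass from the qualitative $o(n^2)$ formulation to the quantitative $\epsilon$--$\gamma$ form. Since $t_2(n) = \lfloor n^2/4 \rfloor$, the hypothesis $e(G) \geq n^2/4 - \epsilon n^2$ gives $e(G) \geq t_2(n) - \epsilon n^2$. Given $\gamma > 0$, the preceding theorem produces an $\epsilon = \epsilon(\gamma) > 0$ and a threshold $n_0 = n_0(\gamma)$ such that for every $n > n_0$, any $TP_3$-free graph $G$ meeting the hypothesis can be obtained from $T_2(n)$ by adding and deleting a total of at most $\gamma n^2$ edges; this is exactly the conclusion $|E(G)\,\Delta\,E(T_2(n))| \leq \gamma n^2$.

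There is no real obstacle: the statement is a direct reformulation of the Erd\H{o}s--Simonovits theorem specialised to $H = TP_3$. The only genuine content is the explicit $3$-coloring, and even this is essentially folklore because $TP_3$ embeds into the triangular lattice, which is well known to be $3$-chromatic. The rest is routine unwinding of the definitions of $o(\cdot)$ into the $(\epsilon,\gamma,n_0)$ form used in the remainder of the paper.
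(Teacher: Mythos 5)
Your proposal is correct and matches the paper's approach exactly: the paper also obtains this statement as a direct restatement of the quoted Erd\H{o}s--Simonovits stability theorem using $\chi(TP_3)=3$, offering no further justification. Your explicit $3$-coloring $c(x_r^i)=(i+r)\bmod 3$ and the unwinding of the $o(n^2)$ statement into the $(\epsilon,\gamma,n_0)$ form simply supply details the paper leaves implicit.
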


We will prove the following version of Theorem \ref{maintheorem}.
\begin{theorem}\label{mainthm}
For $\delta >0$ and $n\geq \frac{5n_0(\delta)}{2\delta}$, the maximum number of edges in an $n$-vertex $TP_3$-free graph is $\ex(n,TP_3)\leq \frac{n^2}{4}+(1+\delta)n$. 
\end{theorem}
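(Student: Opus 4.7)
The plan is to combine Theorem \ref{stability} with a minimum-degree reduction and an explicit embedding of $TP_3$. Suppose for contradiction that $G$ is an $n$-vertex $TP_3$-free graph with $e(G) > \frac{n^2}{4} + (1+\delta)n$. Choose $\gamma = \gamma(\delta)$ small and apply Theorem \ref{stability}: $G$ is within $\gamma n^2$ edges of $T_2(n)$. Taking a max-cut partition $V(G) = A \cup B$ gives $|A|,|B| = \frac{n}{2} \pm o(n)$, with the ``bad'' edges (inside $A$ or $B$) together with the missing cross-edges totaling $O(\gamma n^2)$. Since $e(A,B) \le |A||B| \le \frac{n^2}{4}$, the edge surplus forces $e(G[A]) + e(G[B]) > (1+\delta)n$.

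Next I reduce to large minimum degree. Whenever the current graph $G_i$ on $n_i$ vertices contains a vertex $v$ of degree less than $n_i/2$, delete $v$; a short calculation shows the inequality $e(G_i) > \frac{n_i^2}{4} + (1+\delta)n_i$ is preserved at each step with constant slack. The process terminates at some $G'$ on $n'$ vertices with $\delta(G') \ge n'/2$. The size hypothesis $n \ge \frac{5 n_0(\delta)}{2\delta}$ is arranged so that $n' \ge n_0(\gamma)$, which keeps Theorem \ref{stability} applicable to $G'$, yielding a max-cut partition $(A',B')$ of the same form. Every vertex of $G'$ now has at least $(\frac{1}{2} - o(1))n'$ neighbors in the opposite part, and any two same-part vertices share $\Omega(n')$ common neighbors across the cut.

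Finally I embed $TP_3$ into $G'$. The strategy is to locate a $W_7$ first and then extend it to $TP_3$. Because a graph of maximum degree at most $2$ has at most as many edges as vertices, the bound $e(G'[A']) + e(G'[B']) > (1+\delta)n' \ge n'$ forces the bad-edge graph to contain a vertex $v$ of bad-degree at least $3$. Using $v$ as the centre of a $W_7$ and three of its bad neighbors as the same-side vertices of the outer $C_6$, the remaining three $C_6$-vertices are chosen in the opposite part as common neighbors of appropriate pairs, exploiting the structure of $TP_3$'s unique degree-$6$ vertex $x_2^3$ whose neighborhood in $TP_3$ is exactly a $C_6$. Extending the located $W_7$ to a full $TP_3$ then amounts to selecting three additional vertices (playing the roles of $x_1^1, x_1^4, x_4^4$) from common cross-neighborhoods of appropriate tuples, again absorbing the $O(\gamma n^2)$ exceptional non-edges by pigeonhole.

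The main obstacle is this last step. Pinning down the $C_6$ around $v$ so that the correct bad-edge pattern appears in $G'$, and then extending to $TP_3$ while respecting every one of the eighteen adjacencies, requires a careful case analysis on how the bad-degree-$\ge 3$ vertex interacts with the rest of the bad-edge graph, together with repeated pigeonhole arguments over the $O(\gamma n^2)$ exceptional edges. The $\delta n$ slack must suffice uniformly across all such cases, and verifying that is where the technical core of the proof will lie.
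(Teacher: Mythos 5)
Your overall route --- stability, a balanced max-cut $A\cup B$, a surplus of more than $(1+\delta)n$ edges inside the parts, a $W_7$ straddling the cut, and an extension to $TP_3$ via common neighbours of a perfect matching of the outer $C_6$ --- is the same as the paper's. But there are genuine gaps, and the first is fatal as stated: a vertex $v$ with three neighbours in its own part does \emph{not} yield a $TP_3$. The extremal construction of Figure \ref{fig1} is a complete bipartite graph plus vertex-disjoint $K_5$'s inside one part, so every vertex in a $K_5$ has bad-degree $4$; you can build your $W_7$ around such a $v$, but the common neighbourhood of each cross-edge $a_ib_j$ of the outer $C_6$ is contained in that same $K_5$, leaving at most one usable vertex for the three roles $x_1^1,x_1^4,x_4^4$. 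What is actually needed is an \emph{induced} $K_{1,3}$ in the bad-edge graph joined completely to an \emph{independent} triple on the other side (the paper's ``sparse'' $7$-wheel), and producing one is exactly where the $\delta n$ surplus is spent: the paper first eliminates vertices of bad-degree $\ge\beta n$ (Lemma \ref{lm3}) and spider subgraphs, then converts the remaining $\ge\delta n/2$ excess bad edges into $\Theta(\delta n)$ disjoint induced $K_{1,3}$'s, and only the product ``$\Theta(\delta n)$ stars times $\Theta(n)$ independent triples exceeds $\gamma n^2$'' defeats the stability error term. A single $K_{1,3}$ paired with $\Theta(n)$ triples forces only $\Theta(n)$ missing cross-edges, far below $\gamma n^2$, so your single pigeonhole cannot close.

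The second gap is the step you yourself flag as the technical core: extending even a sparse $W_7$ to $TP_3$. Your reduction to $\delta(G')\ge n'/2$ is not enough there, because the matching edges of the outer $C_6$ are cross-edges $a_ib_j$, and $|N(a_i)\cap N(b_j)|\ge d(a_i)+d(b_j)-n'$ is then only guaranteed to be nonnegative; the common neighbourhood of a cross-edge is governed by the two bad-degrees and can be empty. The paper's Lemma \ref{lemma2} therefore builds the stronger property $d(x)+d(y)\ge v(H)+2$ for every edge $xy$ into the reduction, and Lemma \ref{lemma4} still has to dispose of the hard subcase in which the three matching edges share only two common outside neighbours in total, via a separate degree contradiction. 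Finally, your assertion that every vertex of $G'$ has $(\tfrac12-o(1))n'$ cross-neighbours does not follow from max-cut plus minimum degree alone (max-cut only keeps $d_A(x)$ from exceeding $d_B(x)$, so a vertex may still have $n'/4$ bad neighbours); removing such vertices is again the content of Lemma \ref{lm3} and must precede the embedding.
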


Given a $\delta$, we define the following functions of $\delta$.   The $n_0(\delta)$ in Theorem \ref{mainthm} is coming from the Theorem \ref{stability} and let $\beta(\delta)\geq \frac{\delta}{9296}$.  Whereas $\gamma(\delta)$ satisfies the inequalities $\beta^3 + 512 \beta \gamma^2<16\beta (\beta + 1) (2 \beta + 1) \gamma$ and $\frac{\delta}{1328}\times \frac{\frac{1}{2}-3\beta }{3}<\gamma$.  For brevity of the paper, we do not calculate these functions preciously.

For technical reasons, we start by proving the following weaker version of Theorem \ref{mainthm}.
\begin{lemma}\label{lemma1}
Let $G$ is a $TP_3$-free graph on $n$, $n\geq 10$ vertices. Then $e(G)\leq \frac{n^2}{4}+\frac{7}{2}n$.
\end{lemma}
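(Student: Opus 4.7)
The plan is to prove the lemma by induction on $n$. The base case $n\le 16$ is trivial because $\binom{n}{2}\le \tfrac{n^2}{4}+\tfrac{7n}{2}$ holds throughout that range, so any graph on $n$ vertices automatically satisfies the bound. For the inductive step with $n\ge 17$, assume the bound for $n-1$ and suppose, for contradiction, that $G$ is $TP_3$-free on $n$ vertices with $e(G)>\tfrac{n^2}{4}+\tfrac{7n}{2}$.

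First I would dispose of the low minimum-degree case: if there is some $v\in V(G)$ with $d(v)\le \floor{n/2}+3$, then $G-v$ is still $TP_3$-free on $n-1$ vertices, and a direct calculation gives
\[
e(G-v)\;=\;e(G)-d(v)\;>\;\tfrac{n^2}{4}+\tfrac{7n}{2}-\left(\tfrac{n}{2}+3\right)\;>\;\tfrac{(n-1)^2}{4}+\tfrac{7(n-1)}{2},
\]
contradicting the inductive hypothesis applied to $G-v$. Hence it suffices to treat the high minimum-degree case $\delta(G)\ge \floor{n/2}+4$, where I would produce a copy of $TP_3$ in $G$ directly.

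The embedding I would aim for exploits the structural observation that $TP_3$ contains the $7$-wheel $W_7$ as a subgraph: the vertex $x_2^3$ has exactly six neighbors $x_1^2, x_2^2, x_3^3, x_3^4, x_2^4, x_1^3$, and in this cyclic order they form a $C_6$ in $TP_3$. The remaining three vertices $x_1^1, x_4^4, x_1^4$ are pendant ``corners,'' each adjacent to a distinct pair of consecutive vertices on this $C_6$, and the three pairs form a perfect matching of the cycle. So to embed $TP_3$ into $G$ it is enough to find (a) a hub $v$ together with a $6$-cycle $c_1c_2c_3c_4c_5c_6\subseteq G[N(v)]$, and then (b) three pairwise distinct vertices $u_1,u_3,u_5\in V(G)\setminus\{v,c_1,\ldots,c_6\}$ with $u_i\in N(c_i)\cap N(c_{i+1})$ for each $i\in\{1,3,5\}$. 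Under the assumption $\delta(G)\ge \floor{n/2}+4$, we have $|N(v)|\ge n/2+4$, the induced subgraph $G[N(v)]$ has minimum degree at least $2\delta(G)-n$, and each common neighborhood $N(c_i)\cap N(c_{i+1})$ has size at least $2\delta(G)-n$, so candidates for both (a) and (b) are abundant in principle.

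The hard part will be step (b): na\"ively, after deleting the seven vertices of the $W_7$ from $N(c_i)\cap N(c_{i+1})$ and then the two previously chosen $u_j$'s, the count can drop to zero, so it is not clear one can pick three \emph{distinct} extensions. The plan to push through is to choose the $6$-cycle to be \emph{induced} in $G[N(v)]$; then for $j\notin\{i,i+1\}$ the vertex $c_j$ is not adjacent to both $c_i$ and $c_{i+1}$, so $\{c_1,\ldots,c_6\}\cap(N(c_i)\cap N(c_{i+1}))=\emptyset$ and only $v$ itself must be removed from the common-neighbor set, leaving enough slack to select $u_1,u_3,u_5$ pairwise distinct. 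The technical heart of the argument is therefore to guarantee an induced $6$-cycle inside $G[N(v)]$ from its vertex count $\ge n/2+4$ and minimum degree $\ge 2\delta(G)-n$ --- or, failing that, to set up a counting/averaging argument that exhibits the three corner extensions directly.
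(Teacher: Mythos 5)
Your induction setup is sound: the base case $n\le 16$ via $\binom{n}{2}\le \tfrac{n^2}{4}+\tfrac{7n}{2}$ checks out, the deletion of a vertex of degree at most $\lfloor n/2\rfloor+3$ is arithmetically valid, and your decomposition of $TP_3$ as a $7$-wheel plus three corner vertices attached to a perfect matching of the rim is exactly the structure the paper exploits. But the proof has a genuine gap precisely where you flag it: nothing in the proposal actually produces the $7$-wheel, let alone one whose rim is an \emph{induced} $C_6$ in $G[N(v)]$. Minimum degree roughly $n/2$ does not yield an induced $6$-cycle in a neighborhood by any local argument (a neighborhood of minimum degree $7$ can, for instance, be a union of cliques); indeed, a $7$-wheel with chordless rim is the ``sparse $7$-wheel'' that the remainder of the paper needs stability machinery (Lemmas 4--6) to find. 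The paper's Lemma 1 sidesteps all of this by importing the known Tur\'an number $\ex(n,W_7)=\lfloor \tfrac{n^2}{4}+\tfrac{n}{2}+1\rfloor$ of Dzido and Jastrz\c{e}bski: since $e(G)$ exceeds this, a (not necessarily induced) $W_7$ exists, and induced-ness is never needed.

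There is also a quantitative problem with your fallback route even if you accept a non-induced $W_7$. With $\delta(G)\ge\lfloor n/2\rfloor+4$ you get only $|N(c_i)\cap N(c_{i+1})|\ge 2\delta-n\ge 7$ for $n$ odd; after discarding the hub and the (up to four) other rim vertices that may lie in this set when the rim has chords, you are left with as few as $2$ candidate corners per matching edge, and Hall's condition for three distinct corners can fail. Sharpening the degree threshold to $\lfloor n/2\rfloor+4$ for deletion breaks the induction constant ($-4<-13/4$). The paper resolves this by deleting \emph{both} endpoints of an edge lying in at most $7$ triangles (costing at most $n+6$ edges against a budget of $n+6$), which directly guarantees every surviving edge lies in at least $8$ triangles, hence at least $3$ common neighbors outside the seven wheel vertices per matching edge, and then the system-of-distinct-representatives argument closes. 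You should either adopt that edge-deletion step or otherwise strengthen the local triangle count before the corner-selection stage can work.
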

\begin{proof}
The maximum number of edges in $7$-wheel free graph on $n$ vertices is $\text{ex}(n,W_7)=\lfloor{\frac{n^2}{4}+\frac{n}{2}+1}\rfloor$ \cite{TOMASZ}, which is less than or equal $\frac{n^2}{4}+\frac{7}{2}n$. So, we may assume that $G$ contains a $7$-wheel. We claim that each edge in $G$ is contained in at least $8$ triangles. Suppose not and there is an edge $xy\in E(G)$ such that $|N(x,y)|\leq 7$. In this case, the number of edges that are incident to either $x$ or $y$ is at most $n+6$.  By the induction hypothesis,
$$e(G)\leq e(G-\{x,y\})+(n+6)\leq \frac{(n-2)^2}{4}+\frac{7}{2}(n-2)+(n+6)=\frac{n^2}{4}+\frac{7}{2}n.$$
One can check that the statement also holds for small $n$.

Now consider a $7$-wheel in $G$ with $6$-cycle $x_1x_2x_3x_4x_5x_6x_1$ and center $y$. For any edge $x_ix_j$ in the $6$-cycle, it can be easily seen that there are at least $3$ vertices in $V(G)\backslash \{x_1,x_2,\dots, x_6,y\}$ which are adjacent to both $x_i$ and $x_j$. Therefore by the Pigeonhole principle, we can find three distinct vertices, say $y_1, y_2$ and $y_3$ which are in $N^*(x_1,x_2), N^*(x_3,x_4)$, and $N^*(x_5,x_6)$ respectively.   This is a contradiction as $G$ does not contain a $TP_3$.   
\end{proof}

\begin{lemma}\label{lemma2}
Let $\delta>0$ be given. Let $G$ be an $n$-vertex, $n\geq \frac{5n_0(\gamma)}{2\delta}$ with $e(G)>\frac{n^2}{4}+(1+\delta)n$ edges. Then either $G$ contains a $TP_3$ or $G$ contains a subgraph $G_0$ on $n_0$ vertices such that $e(G_0)> \frac{n_0^2}{4}+(1+\delta)n_0$ with $d(x)>\floor{\frac{n_0}{2}+1}$,
for all $x\in V(G_0)$ and any two adjacent vertices are incident to at least $n_0+2$ common vertices (so each edge is contained in at least three triangles).
\end{lemma}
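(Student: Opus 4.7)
The plan is to assume that $G$ is $TP_3$-free (otherwise the first alternative of the lemma holds) and to extract $G_0$ from $G$ by an iterative cleanup procedure. Starting with $H\leftarrow G$, I would repeat the following as long as one of the two rules applies, writing $n':=|V(H)|$ at each step:
\begin{itemize}
    \item[(A)] if some $v\in V(H)$ satisfies $d_H(v)\le \lfloor n'/2+1\rfloor$, replace $H$ by $H-v$;
    \item[(B)] if some edge $xy\in E(H)$ satisfies $|N_H(x)\cap N_H(y)|\le 2$, replace $H$ by $H-\{x,y\}$.
\end{itemize}
When neither (A) nor (B) applies, declare $G_0:=H$. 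By construction every vertex of $G_0$ has degree strictly greater than $\lfloor n_0/2+1\rfloor$, and every edge of $G_0$ has at least three common neighbors, i.e.\ lies in at least three triangles.

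The main calculation is that the density invariant $e(H)>n'^2/4+(1+\delta)n'$ is preserved at every step. Let $\Phi(H):=e(H)-n'^2/4-(1+\delta)n'$. For operation (B), since $d(x)+d(y)=|N(x)\cup N(y)|+|N(x)\cap N(y)|\le n'+2$, at most $n'+1$ edges are lost, while the threshold $n'^2/4+(1+\delta)n'$ drops by $(n'-1)+2(1+\delta)=n'+1+2\delta$. Hence $\Phi$ increases by at least $2\delta$. For operation (A), at most $\lfloor n'/2+1\rfloor$ edges are lost while the threshold drops by $(n'/2-1/4)+(1+\delta)$, so $\Phi$ changes by at least $\delta-1/4$. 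For $\delta\ge 1/4$ the invariant is preserved automatically; for $\delta<1/4$ each (A)-step costs at most $1/4-\delta$ of slack.

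The main obstacle is the accounting in step (A) when $\delta$ is small: I would need to show the cumulative slack loss does not exceed the initial slack $\Phi(G)$, which by Lemma~\ref{lemma1} is at most $(5/2-\delta)n$. A crude bound yields at most $O(n)$ many (A)-steps, and the hypothesis $n\ge 5n_0(\gamma)/(2\delta)$ provides enough room so that the process cannot exhaust the slack before halting. At termination, the failure of both (A) and (B) is precisely the minimum-degree and common-neighbor conditions required of $G_0$, and the surviving density invariant gives $e(G_0)>n_0^2/4+(1+\delta)n_0$. The delicate part is thus the integer/parity bookkeeping for (A); it is straightforward but must be tracked carefully to avoid an off-by-$\tfrac14$ deficit.
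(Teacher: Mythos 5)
Your deletion procedure and potential function $\Phi(H)=e(H)-v(H)^2/4-(1+\delta)v(H)$ are essentially the paper's approach (the paper iteratively deletes vertices violating the degree/common-neighbor conditions and tracks exactly this slack), and your computations of the per-step changes are correct: a (B)-step gains at least $2\delta$, and an (A)-step changes $\Phi$ by at least $\delta-\tfrac14$. The genuine gap is in how you handle the possible loss of $\tfrac14-\delta$ per (A)-step when $\delta<\tfrac14$. You propose to absorb the cumulative loss into the ``initial slack, which by Lemma~\ref{lemma1} is at most $(5/2-\delta)n$'' --- but that is an \emph{upper} bound on $\Phi(G)$, whereas what you need is a \emph{lower} bound, and the hypothesis only gives $\Phi(G)>0$, possibly arbitrarily close to $0$. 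A single (A)-step could then already drive $\Phi$ negative, and your invariant fails. The hypothesis $n\ge 5n_0(\gamma)/(2\delta)$ cannot rescue this: in the paper it plays no role in preserving the slack; it is used only at the very end to convert the bound on the number of deleted vertices into $v(G_0)\ge n_0(\gamma)$.

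The missing idea is the parity alternation exploited in the paper's Claim: the per-step edge loss is at most $\lfloor n'/2+1\rfloor$, which equals $\tfrac{n'}{2}+1$ when $n'$ is even but only $\tfrac{n'+1}{2}$ when $n'$ is odd, so an (A)-step at odd $n'$ \emph{gains} $\tfrac14+\delta$ while one at even $n'$ loses $\tfrac14-\delta$. Since $n'$ decreases by one each step, these pair up into a net gain of at least $2\delta$ per pair, and only a single unpaired step at even $n'$ (the first step, when $n$ is even) remains; the paper disposes of it by an integrality argument on $e(G)$. This is what lets the paper prove the stronger statement $e(H)\ge \tfrac{(n-m)^2}{4}+(1+\delta)(n-m)+\delta m$ after $m$ deletions. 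You also omit the final step of the argument: one must rule out the process deleting too many (or all) vertices, which is done by applying Lemma~\ref{lemma1} to the terminal $TP_3$-free graph $H$ to get $e(H)\le \tfrac{(n-m)^2}{4}+\tfrac72(n-m)$, hence $m\le \tfrac{2.5-\delta}{2.5}n$ and $n-m\ge \tfrac{2\delta n}{5}\ge n_0(\gamma)$ --- this is where the hypothesis on $n$ is actually used. (A minor further remark: whenever your rule (B) applies, one endpoint already has degree at most $\lfloor n'/2+1\rfloor$, so (B) is subsumed by (A) and the paper effectively only needs single-vertex deletions.)
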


\begin{proof}
Define a subgraph $H$ of $G$ as good if $e(H)> \frac{v(H)^2}{4}+(1+\delta)v(H)$ with 
\begin{equation}\label{equation2}
d(x)>\floor{\frac{v(H)}{2}+1},
\end{equation}
for all $x\in V(H)$ and any two adjacent vertices are incident to at least $v(H)+2$ edges.

If every vertex in $G$ satisfies the property (\ref{equation2}) (i.e., $G$ itself is good), then the lemma holds.

Otherwise, we delete the vertex in $G$ if it doesn't satisfy the degree condition in (\ref{equation2}) or along with one of its neighbors, they have fewer than $V(G)+2$ edges incident to it. We repeat this step, say $m$ times, till we get a subgraph $H$, satisfying the property (\ref{equation2}). 

We claim the following: 
\begin{claim}$e(H)\geq\frac{(n-m)^2}{4}+(1+\delta)(n-m)+\delta m.$
\end{claim}
\begin{proof}
Suppose not and $e(H)< \frac{(n-m)^2}{4}+(1+\delta)(n-m)+\delta m.$ We distinguish the following four cases based on the parity of $n$ and $m$ to complete the proof.
\subsubsection*{Case 1: $n$ is odd}
The sequence of the number of edges we delete form $G$ in each steps when $m$ is even and $m$ is odd are respectively $$\left(\frac{n+1}{2},\frac{n+1}{2},\frac{n-1}{2},\frac{n+1}{2},\dots, \frac{n-m+3}{2},\frac{n-m+3}{2}\right)$$ and $$\left(\frac{n+1}{2},\frac{n+1}{2},\frac{n-1}{2},\frac{n+1}{2},\dots, \frac{n-m+4}{2},\frac{n-m+4}{2}, \frac{n-m+2}{2}\right).$$ It can be checked that the number of edges be deleted after $m$ steps are respectively $\frac{m}{4}(2n-m+4)$ and $\frac{(m-1)}{4}(2n-m+5)+\frac{n-m+2}{2}=\frac{mn}{2}-\frac{m^2}{4}+m-\frac{1}{4}.$
Thus, when $m$ is even, 
\begin{align*}
e(G)\leq E(H)+\frac{m}{4}(2n-m+4)&<\left(\frac{(n-m)^2}{4}+(1+\delta)(n-m)+\delta m\right)+\frac{m}{4}(2n-m+4)\\&=\frac{n^2}{4}+(1+\delta)n,
\end{align*}
which is a contradiction.
When $m$ is odd, we have 
\begin{align*}
e(G)\leq E(H)+\frac{m}{4}(2n-m+4)&<\left(\frac{(n-m)^2}{4}+(1+\delta)(n-m)+\delta m\right)-\frac{m^2}{4}+\frac{mn}{2}+m-\frac{1}{4}\\&=\frac{n^2}{4}+(1+\delta)n-\frac{1}{4},
\end{align*}
which is again a contradiction.
\subsubsection*{Case 2: $n$ is even}
The sequence of the number of edges deleted in $m$ steps from $G$, when $m$ is odd and $m$ is even, are respectively $$\left(\frac{n+2}{2},\frac{n}{2},\frac{n}{2},\dots,\frac{n-m+3}{2},\frac{n-m+3}{2}\right)$$ and $$\left(\frac{n+2}{2},\frac{n}{2},\frac{n}{2},\dots,\frac{n-m+4}{2},\frac{n-m+4}{2},\frac{n-m+2}{2}\right).$$

Again it can be checked that the number of edges deleted after $m$ steps are respectively $\frac{m-1}{4}(2n-m+3)+\frac{n+2}{2}=-\frac{m^2}{4}+\frac{mn}{2}+m+\frac{1}{4}$ and $\frac{m-2}{4}(2n-m+4)+\frac{n+2}{2}+\frac{n-m+2}{2}=\frac{mn}{2}-\frac{m^2}{4}+m.$ 
When $m$ is even, we have 

\begin{align*}
e(G)\leq E(H)+\frac{m}{4}(2n-m+4)&<\left(\frac{(n-m)^2}{4}+(1+\delta)(n-m)+\delta m\right)-\frac{m^2}{4}+\frac{mn}{2}+m+\frac{1}{4}\\&=\frac{n^2}{4}+(1+\delta)n+\frac{1}{4}.
\end{align*}
Clearly, $e(G)\leq \frac{n^2}{4}+(1+\delta)n$. Otherwise, we get an integer between $\frac{n^2}{4}+(1+\delta)n$ and $\frac{n^2}{4}+(1+\delta)n+\frac{1}{4}$, which is not true. This contradicts the fact that $e(G)>\frac{n^2}{4}+(1+\delta)n$.

When $m$ is odd, we have 
\begin{align*}
e(G)\leq E(H)+\frac{m}{4}(2n-m+4)&<\left(\frac{(n-m)^2}{4}+(1+\delta)(n-m)+\delta m\right)-\frac{m^2}{4}+\frac{mn}{2}+m\\&=\frac{n^2}{4}+(1+\delta)n,
\end{align*}
which is again a contradiction. 
\end{proof}
If $H$ contains a $TP_3$, we are immediately done.  Hence consider $H$ is $TP_3$-free.  By the previous lemma, $e(H)\leq \frac{(n-m)^2}{4}+\frac{7}{2}(n-m)$. Thus, 
\begin{equation*}
\begin{split}
    \frac{(n-m)^2}{4}+(1+\delta)(n-m)+\delta m &\leq \frac{(n-m)^2}{4}+\frac{7}{2}(n-m).
\end{split}
\end{equation*}
Hence, $$ m\leq\frac{2.5-\delta}{2.5}n.$$

This implies $n-m\geq \frac{2\delta n}{5}$. The condition, $n\geq\frac{5n_0(\gamma)}{2\delta}$ implies $n-m\geq n_0(\gamma)$ and thus we found the good subgraph $H$ of $G$.\\
\end{proof}

\begin{remark}
For the rest of the write-up, we always work on this ``good" subgraph and to simplify notations we denote it by $G$.
\end{remark}
\begin{definition}
We call a $7$-wheel in a graph $G$ with the $6$-cycle, say $x_1x_2x_3x_4x_5x_6x_1$, and center $y$, as a \textbf{sparse $7$-wheel}, if $x_ix_{i+2}\notin E(G)$ for all $i\in\{1,2\dots,6\}$ (see Figure \ref{goodwheel}).  
\end{definition}

\begin{figure}[h]
\centering
\begin{tikzpicture}[scale=0.6]
\draw[thick](-2,-2)--(-4,-4)(2,-2)--(4,-4)(2,-2)--(-2,-6)(4,-4)--(2,-6)(-2,-2)--(2,-6)(-4,-4)--(-2,-6);
\draw[thick](-2,-2)--(2,-2)(-4,-4)--(4,-4)(-2,-6)--(2,-6);
\draw[dashed, red](-4,-4)--(2,-2)--(2,-6)--(-4,-4)(4,-4)--(-2,-2)--(-2,-6)--(4,-4);
\draw[fill=black](-2,-2)circle(5pt);
\draw[fill=black](2,-2)circle(5pt);
\draw[fill=black](-4,-4)circle(5pt);
\draw[fill=black](0,-4)circle(5pt);
\draw[fill=black](4,-4)circle(5pt);
\draw[fill=black](-2,-6)circle(5pt);
\draw[fill=black](2,-6)circle(5pt);
\node at (-3,-2) {$x_1$};
\node at (3,-2) {$x_2$};
\node at (4.5,-4) {$x_3$};
\node at (-4.5,-4) {$x_6$};
\node at (2,-7) {$x_4$};
\node at (-2,-7) {$x_5$};
\node at (0,-3.5) {$y$};
\end{tikzpicture}
\caption{A sparse $7$-wheel, the doted red edges are not in $G$.}
\label{goodwheel}
\end{figure}
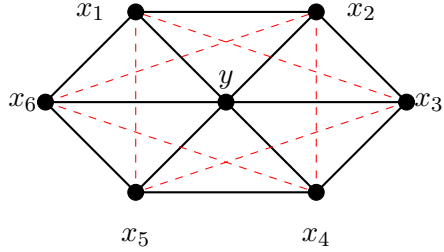
\begin{lemma}\label{lemma4}
Let $\delta>0$ and $G$ be a graph on $n$ vertices containing a sparse $7$-wheel and $e(G)\geq \frac{n^2}{4}+(1+\delta)n$, then $G$ contains a $TP_3$. 
\end{lemma}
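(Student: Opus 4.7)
The plan is to extend the sparse $7$-wheel $W$ (with center $y$ and chordless $6$-cycle $C=x_1 x_2 \cdots x_6 x_1$) into a $TP_3$ by adjoining three suitably chosen vertices $y_1,y_2,y_3$ of $G$. The key observation about $TP_3$ itself is that its unique degree-$6$ vertex $x_2^3$ has its six neighbors forming the chordless $6$-cycle $x_1^3\,x_1^2\,x_2^2\,x_3^3\,x_3^4\,x_2^4$, and the remaining three vertices $x_1^1,x_1^4,x_4^4$ (each of degree $2$) are attached respectively to the pairs $\{x_1^2,x_2^2\}$, $\{x_1^3,x_2^4\}$, $\{x_3^3,x_3^4\}$, which are three alternating edges of that $6$-cycle. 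Thus, identifying $y\mapsto x_2^3$ and the wheel's $6$-cycle with the neighbor cycle of $x_2^3$, it suffices to find three distinct vertices of $G$ that are common neighbors of three alternating edges of $C$.

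By the good-subgraph triangle condition every edge of $G$ lies in at least three triangles, so each edge $x_ix_{i+1}$ has at least three common neighbors in $G$; one is $y$, and by the sparseness of $W$ no other wheel vertex can be a common neighbor. Hence every such edge has at least two common neighbors in $V(G)\setminus(\{y\}\cup\{x_1,\dots,x_6\})$. Let $A_1,A_2,A_3$ denote these outside-wheel common-neighbor sets for the alternating edges $\{x_2x_3,\,x_4x_5,\,x_6x_1\}$, and $B_1,B_2,B_3$ for the complementary family $\{x_1x_2,\,x_3x_4,\,x_5x_6\}$; each has size $\ge 2$. Hall's marriage theorem yields a system of distinct representatives for $\{A_i\}$ (resp.\ $\{B_i\}$) unless the three sets coincide as $\{v,w\}$ (resp.\ $\{v',w'\}$), in which case $v,w$ (resp.\ $v',w'$) are ``super-vertices'' adjacent to all six $x_i$. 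Any successful SDR immediately produces $y_1,y_2,y_3$ realizing the desired $TP_3$.

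Suppose both Hall conditions fail. Since $v'$ and $w'$ are super-vertices outside the wheel, they are common neighbors of $x_1,x_2$ outside $W$, hence lie in $A_1=\{v,w\}$; by symmetry $\{v,w\}=\{v',w'\}$, so the set of super-vertices is $S=\{y,v,w\}$ with $|S|=3$. Any $u\in U:=V(G)\setminus(S\cup\{x_1,\dots,x_6\})$ is then not a common neighbor of any $6$-cycle edge (any such common neighbor lies in $\{v,w\}$), so its neighbors in $C$ form an independent set of size $\le 3$ in the $6$-cycle. Combining this with the minimum-degree bound $d(x_i)\ge\lfloor n/2\rfloor+2$ and double counting,
\[
6\bigl(\lfloor n/2\rfloor-3\bigr)\;\le\;\sum_{i=1}^{6}\bigl|N(x_i)\cap U\bigr|\;\le\;3|U|\;=\;3(n-9),
\]
which simplifies to $2\lfloor n/2\rfloor\le n-3$, a contradiction for both parities of $n$. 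Thus the degenerate case cannot occur and one of the two SDRs must succeed, giving the required $TP_3$.

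The main obstacle is the degenerate case where both Hall conditions fail simultaneously; the crucial insight is that the super-vertex pairs from the two alternating families are forced to coincide, keeping $|S|$ as small as $3$ and enabling the clean double-counting contradiction.
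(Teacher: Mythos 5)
Your reduction of the problem to finding a system of distinct representatives for the outside common neighborhoods of three alternating edges of the sparse wheel is exactly the paper's first step, and your analysis of the degenerate Hall case correctly forces two vertices $v,w$ outside the wheel that are adjacent to all six cycle vertices (the paper's $v_1,v_2$). From that point your route diverges: the paper examines the sets $A=N(x_1)$ and $B=N(x_2)$ (and $A',B'$ for $x_5,x_6$) restricted to $V(G)\setminus\{x_1,\dots,x_6,y,v_1,v_2\}$, shows each has size exactly $\frac{n-9}{2}$, proves every vertex of $A$ has a neighbor inside $A$, and concludes that $y$ would have bounded degree, contradicting the minimum-degree condition of the good subgraph. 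Your global double count is cleaner in spirit, but it has a gap.

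The gap is in the line $6\bigl(\lfloor n/2\rfloor-3\bigr)\le\sum_i|N(x_i)\cap U|$. This requires $|N(x_i)\setminus U|\le 5$ for each $i$, i.e.\ that $N(x_i)$ meets $\{x_1,\dots,x_6,y,v,w\}\setminus\{x_i\}$ in at most five vertices. Sparseness only forbids the short chords $x_ix_{i+2}$; the long chords $x_ix_{i+3}$ are \emph{not} excluded by the paper's definition (and the sparse wheels produced later in the paper, built from an independent triple in $A$ joined to one in $B$, can genuinely carry all three long chords, since those chords run between the two sides). You describe $C$ as ``chordless,'' which silently strengthens the hypothesis. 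With $x_{i+3}$ possibly adjacent to $x_i$, the correct bound is $|N(x_i)\cap U|\ge\lfloor n/2\rfloor-4$, and the double count gives $6\bigl(\lfloor n/2\rfloor-4\bigr)\le 3(n-9)$, i.e.\ $2\lfloor n/2\rfloor\le n-1$. For even $n$ this is still a contradiction, but for odd $n$ it holds with equality, so no contradiction is reached. You would need to handle the residual case ($n$ odd, all three long chords present, every $x_i$ of degree exactly $\frac{n+3}{2}$, every $u\in U$ seeing exactly one of the two independent triples of $C$) by a separate rigidity argument or by showing a long chord itself forces a $TP_3$; as written, this case is open and the proof is incomplete.
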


\begin{proof}
Suppose $e(G)>\frac{n^2}{4}+(1+\delta)n$. Then by Lemma \ref{lemma2}, $G$ contains a good subgraph $H$. That means,  
\begin{equation}\label{equation1}d(x)>
\begin{cases}
\frac{v(H)}{2}+1, &2\mid v(H),\\
\frac{v(H)+1}{2}, &2\nmid v(H).\\
\end{cases}
\end{equation}
For all $x\in V(H)$ and any two adjacent vertices that are incident to at least $v(H)+2$ edges( and so every edge is contained in at least three triangles).  Note $G$ is a good subgraph.

Let a sparse $7$-wheel in $G$ be with center $y$ and $6$-cycle $x_1x_2x_3x_4x_5x_1$ as shown in Figure \ref{goodwheel}. Since $G$ is good, for each $x_ix_{i+1}$, $i\in \{1,2,\dots,6\}$, $|N(x_i,x_{i+1})|\geq 3$. Moreover, for each $x_ix_{i+1}$, $i\in \{1,2,\dots,6\}$, all the remaining four vertices of the cycle are not in $N(x_i,x_{i+1})$. Indeed, without loss of generality consider the edge $x_1x_2$. $x_3$ and $x_4$ are not in $N(x_1,x_2)$, since $G$ the wheel is sparse and hence they are not in $N(x_1)$ and $N(x_2)$ respectively. With similar argument $x_6$ and $x_5$ are not in $N(x_1,x_2)$.  Therefore, there exist at least two vertices in $V(G)\backslash\{x_1,x_2,\dots,x_6,y\}$, which are in $N(x_i,x_{i+1})$. Take the matching $x_1x_2,x_3x_4$ and $x_5x_6$. If there are three distinct vertices in $V(G)\backslash\{x_1,x_2,\dots, x_6,y\}$, which are in $N(x_1,x_2)\cup N(x_3,x_4)\cup N(x_5,x_6)$, then $TP_3$ in $G$. Indeed, suppose not. Let $z_1,z_2$ and $z_3$ be vertices in $V(G)\backslash\{x_1,x_2,\dots, x_6,y\}$ such that $\{a,b,c\}\subset N(x_1,x_2)\cup N(x_3,x_4)\cup N(x_5,x_6)$. From the property that $G$ is contains no $TP_3$ and $|N(x_1,x_2)|$, $N|(x_3,x_4)|$ and $|N(x_5,x_6)|$ are at least 3, then each of the sets $N(x_1,x_2)$, $N(x_3,x_4)$ and $N(x_5,x_6)$ must contain at least two of the vertices in $\{z_1,z_2,z_3\}$. By the Hall's Theorem, we get distinct pairing of $z_1,z_2,z_3$ and $N(x_1,x_2),N(x_3,x_4)$ and $N(x_5,x_6)$ such that $z_i\in N(x_j,x_k)$, $i\in \{1,2,3\}$ and $(j,k)\in \{(1,2),(3,4),(5,6)\}$, which is a contradiction to the fact that $G$ does not contain $TP_3$. 
Now we may assume that there are only two distinct vertices, say $v_1$ and $v_2$ in  $V(G)\backslash\{x_1,x_2,\dots, x_6,y\}$, such that $N(x_1,x_2,\dots,x_6)=\{v,v_1,v_2\}$(see Figure \ref{wheelstructure}). 

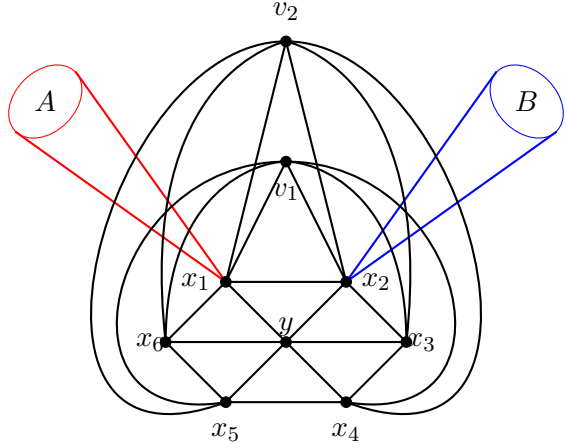
\begin{figure}[h]
\centering
\begin{tikzpicture}[scale=0.4]
\draw[thick](-2,-2)--(-4,-4)(2,-2)--(4,-4)(2,-2)--(-2,-6)(4,-4)--(2,-6)(-2,-2)--(2,-6)(-4,-4)--(-2,-6);
\draw[thick](-2,-2)--(2,-2)(-4,-4)--(4,-4)(-2,-6)--(2,-6);
\draw[thick, red](-2,-2)--(-7,5)(-2,-2)--(-9,3);
\draw[thick,blue](2,-2)--(7,5)(2,-2)--(9,3);
\draw[rotate around={45:(-8,4)},red] (-8,4) ellipse (1.4 and 1);
\draw[rotate around={-45:(8,4)},blue] (8,4) ellipse (1.4 and 1);
\draw[thick](-2,-2)--(0,2)(2,-2)--(0,2)(-2,-2)--(0,6)(2,-2)--(0,6);
\draw[thick](-4,-4)..controls (-4,1) and (-1,2) ..(0,2);
\draw[thick](4,-4)..controls (4,1) and (1,2) ..(0,2);
\draw[thick](-4,-4)..controls (-5,5) and (0,6) ..(0,6);
\draw[thick](4,-4)..controls (5,5) and (0,6) ..(0,6);
\draw[thick](-2,-6)..controls (-8,-7) and (-6,2) ..(0,2);
\draw[thick](2,-6)..controls (8,-7) and (6,2) ..(0,2);
\draw[thick](-2,-6)..controls (-10,-9) and (-6,6) ..(0,6);
\draw[thick](2,-6)..controls (10,-9) and (6,6) ..(0,6);
\node at (-3,-2) {$x_1$};
\node at (3,-2) {$x_2$};
\node at (4.5,-4) {$x_3$};
\node at (-4.5,-4) {$x_6$};
\node at (2,-7) {$x_4$};
\node at (-2,-7) {$x_5$};
\node at (0,-3.5) {$y$};
\node at (-8,4) {$A$};
\node at (8,4) {$B$};
\node at (0,1) {$v_1$};
\node at (0,7) {$v_2$};
\draw[fill=black](0,2)circle(5pt);
\draw[fill=black](0,6)circle(5pt);
\draw[fill=black](-2,-2)circle(5pt);
\draw[fill=black](2,-2)circle(5pt);
\draw[fill=black](-4,-4)circle(5pt);
\draw[fill=black](0,-4)circle(5pt);
\draw[fill=black](4,-4)circle(5pt);
\draw[fill=black](-2,-6)circle(5pt);
\draw[fill=black](2,-6)circle(5pt);
\end{tikzpicture}
\caption{Structure of the subgraph of $G$ with $2$ common neighbors for each vertices on the cycle of the good wheel.}
\label{wheelstructure}
\end{figure}

We prove the lemma for the case when $n$ is odd. With a similar argument, one can also solve the $n$ is even case.

Let $A$ and $B$ be sets of vertices in $V(G)\backslash \{x_1,\dots,x_6,y,v_1,v_2\}$ which are adjacent to $x_1$ and $x_2$ respectively (see Figure \ref{wheelstructure}). Obviously, $A\cap B=\emptyset$. Otherwise, the graph contains a $TP_3$. Thus, either $|A|\leq \frac{n-9}{2} $ or $|B|\leq \frac{n-9}{2}$.

Without loss of generality suppose $|A|\leq \frac{n-9}{2}$. If $|A|\leq \frac{n-11}{2}$, then $d(x_1)\leq |A|+6=\frac{n-11}{2}+6=\frac{n+1}{2}$, which is a contradiction.  

So assume $|A|=\frac{n-9}{2} $. In this case, we also have that $|B|=\frac{n-9}{2}$. We need the following claim to complete proof of the lemma. 
\begin{claim}\label{basic}
Each vertex in $A$ is adjacent to at least one other vertex in $A$ .
\end{claim}
\begin{proof} 
Suppose not and let $x$ be a vertex in $A$ which is adjacent with no other vertex in $A$. The vertex is not adjacent to $x_2$ and $x_6$, otherwise, $G$ contains a $TP_3$.

If $x$ is adjacent to $x_4$, then $x$ is not adjacent to both $x_3$ and $x_5$ too. Otherwise, the graph contains a $TP_3$. In this case, the vertex $x$ is possibly adjacent to $y, v_1, v_2$ and vertices in $B$. Thus considering the vertex $x_1$ which is already adjacent with $x$, we get $d(x)\leq \frac{n-9}{2}+5=\frac{n+1}{2}$.   This is a contradiction to the fact that $G$ is good. 

Let $x$ be adjacent with $x_3$. Then $x$ can not be adjacent to $x_4$. If $x_5$ is not adjacent to $x$, then $d(x)\leq \frac{n-9}{2}+5=\frac{n+1}{2}$, which is a contradiction. So, let $x_5$ be adjacent to $x$. If $x$ is not adjacent to one of the vertices in $\{y,v_1,v_2\}$, then $d(x)\leq \frac{n-9}{2}+5=\frac{n+1}{2}$, which is a contradiction. Otherwise, consider the $7$-wheel, with the $6$-cycle $x_5yx_3v_1x_1v_2x_5$ (see the bold green cycle in Figure \ref{wheelstructureparticular}) and center $x$ . Consider the matching $x_5y$, $x_3v_1$ and $x_1v_2$. We can take the vertices $x_4$, $x_2$ and $x_6$ respectively, which are common neighbors of end vertices of the matching. Thus we get a $TP_3$, in $G$, which is a contradiction to the fact that $G$ is $TP_3$-free.
\end{proof}

\begin{figure}[h]
\centering
\begin{tikzpicture}[scale=0.5]
\draw[thick](-2,-2)--(-4,-4)(2,-2)--(4,-4)(2,-2)--(-2,-6)(4,-4)--(2,-6)(-2,-2)--(2,-6)(-4,-4)--(-2,-6);
\draw[thick](-2,-2)--(2,-2)(-4,-4)--(4,-4)(-2,-6)--(2,-6);
\draw[rotate around={45:(-8,4)},red] (-8,4) ellipse (1.4 and 1);
\draw[thick](-2,-2)--(0,2)(2,-2)--(0,2)(-2,-2)--(0,6)(2,-2)--(0,6);
\draw[thick](-4,-4)..controls (-4,1) and (-1,2) ..(0,2);
\draw[ultra thick, green](4,-4)..controls (4,1) and (1,2) ..(0,2);
\draw[thick](-4,-4)..controls (-5,5) and (0,6) ..(0,6);
\draw[thick](4,-4)..controls (5,5) and (0,6) ..(0,6);
\draw[thick](-2,-6)..controls (-8,-7) and (-6,2) ..(0,2);
\draw[thick](2,-6)..controls (8,-7) and (6,2) ..(0,2);
\draw[ultra thick, green](-2,-6)..controls (-10,-9) and (-6,6) ..(0,6);
\draw[thick](2,-6)..controls (10,-9) and (6,6) ..(0,6);

\draw[ultra thick, green](-2,-6)--(0,-4)--(4,-4)(0,2)--(-2,-2)--(0,6);
\draw[dashed,red](-8,4)--(0,6)(-8,4)--(0,2)(-8,4)--(-2,-2)(-8,4)--(-2,-6)(-8,4)--(4,-4);
\draw[dashed,red](-8,4)..controls (-4, 5) and (0,-2) ..(0,-4);

\node at (-3,-2) {$x_1$};
\node at (3,-2) {$x_2$};
\node at (4.5,-4) {$x_3$};
\node at (-4.5,-4) {$x_6$};
\node at (2,-7) {$x_4$};
\node at (-2,-7) {$x_5$};
\node at (0,-3.5) {$y$};
\node at (-8.5,4) {$x$};
\node at (-8,6) {$A$};
\node at (0,1) {$v_1$};
\node at (0,7) {$v_2$};
\draw[fill=black](-8,4)circle(5pt);
\draw[fill=black](0,2)circle(5pt);
\draw[fill=black](0,6)circle(5pt);
\draw[fill=black](-2,-2)circle(5pt);
\draw[fill=black](2,-2)circle(5pt);
\draw[fill=black](-4,-4)circle(5pt);
\draw[fill=black](0,-4)circle(5pt);
\draw[fill=black](4,-4)circle(5pt);
\draw[fill=black](-2,-6)circle(5pt);
\draw[fill=black](2,-6)circle(5pt);
\end{tikzpicture}
\caption{A graph containing $TP_3$. }
\label{wheelstructureparticular}
\end{figure}
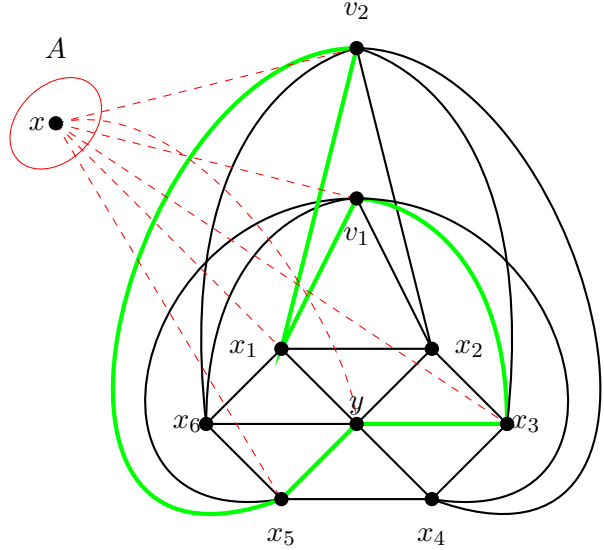 

With the same argument, one can verify that the minimum degree of each vertex in $B$ is at least $1$ in $B$.

Now we finish the proof of Case $1$ of the lemma. Consider the edge $x_5x_6$ and let $A'$ and $B'$ be the set of vertices in $V(G)\backslash \{x_1,\dots,x_6,y,v_1,v_2\}$ which are adjacent to $x_5$ and $x_6$ respectively. For the same reason given above, $|A'|=|B'|=\frac{n-9}{2}$. Clearly $A'\cap B'=\emptyset$. 
Since $A\cap B'=\emptyset$ and $A'\cap B'=\emptyset$, then $|B'\cap B|=|A\cap A'|=\frac{n-9}{2}$.

Let $x\in A\cap A'$.  Suppose $x$ is adjacent to $y$. We can take the $7$-wheel, with $6$-cycle $xx_1x_2x_3x_4x_5x$ and center $y$. By Claim \ref{basic}, there is a vertex $z$ in $A$ which is adjacent to $x$. Since this vertex is adjacent with $x_1$, then taking the matching $xx_1$, $x_2x_3$ and $x_4x_5$ with common neighbors $z, v_1$ and $v_2$ respectively, we show the graph contains a $TP_3$. Therefore, in this case, $x$ cannot be adjacent to $y$. 

Let $t\in B\cap B'$. In this case, $t$ can not be adjacent with $y$. Suppose not. We can take the $7$-wheel, with $6$-cycle $tx_2x_3x_4x_5x_6t$ and center $y$. By Claim \ref{basic}, $t$ is adjacent with a vertex $r$ in $B$. So taking the matching $tx_2$, $x_3x_4$ and $x_5x_6$ with common neighbors $r, v_1$ and $v_2$ respectively, we show that $G$ contains a $TP_3$.  Hence, a contradiction. 

Thus we found that $y$ is a vertex in $G$ with constant degree, which is a contradiction to the fact that $G$ is a good graph. 
\end{proof}

\begin{lemma}\label{lm3}
 Let $G$ be a graph on $n$ vertices, where $n\geq \frac{5n_0(\gamma)}{2\delta}$, and then $e(G)\geq\frac{n^2}{4}+(1+\delta)n$. Let $A$ and $B$ a be partition of $V(G)$ with size as equal as possible and with maximum $e(A,B)$. If $A$ contains (similarly $B$ contains) a vertex, say $x$, such that $d_A(x)\geq \beta n$,   then $G$ contains a $TP_3$. 
\end{lemma}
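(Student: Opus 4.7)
My plan is to construct a sparse $7$-wheel centered at $x$ and then invoke Lemma~\ref{lemma4} to extract a $TP_3$. The $6$-cycle $a_1 b_1 a_2 b_2 a_3 b_3$ of the wheel will alternate between $N_A(x)$ and $N_B(x)$, so that $x$ is automatically adjacent to every cycle vertex.

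First I assemble the structural bounds. By the max-cut property of $(A,B)$, $d_B(x) \geq d_A(x) \geq \beta n$; combined with the good-subgraph minimum degree $d(x) > n/2+1$ from Lemma~\ref{lemma2}, this forces $d_B(x) > n/4$. Applying Theorem~\ref{stability} with parameter $\gamma$ and exploiting that $(A,B)$ is a maximum cut---hence at least as heavy as the canonical $T_2(n)$-bipartition---gives $e(G[A]) + e(G[B]) \leq \gamma n^2$. By double counting, the average within-part degree of vertices in $N_A(x)$ is $O(\gamma n / \beta)$ and of vertices in $N_B(x)$ is $O(\gamma n)$, and a three-set Bonferroni estimate inside $B$ yields, for any two ``typical'' $a_i,a_j \in N_A(x)$,
\[
|N(a_i) \cap N(a_j) \cap N_B(x)| \;\geq\; d_B(a_i) + d_B(a_j) + d_B(x) - 2|B| \;\geq\; \tfrac{n}{4} - O(\gamma n).
\]

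Next I build the $6$-cycle greedily. I pick $a_1, a_2, a_3 \in N_A(x)$ pairwise non-adjacent, each with small $d_A$-degree: by Markov a large majority of $N_A(x)$ is eligible, and each forbidden-adjacency condition excludes only $O(\gamma n/\beta)$ further vertices, which stays well below $|N_A(x)| \geq \beta n$ in the regime $\gamma \lesssim \beta^2$ enforced by the paper's parameter setup. Then I select $b_1 \in N(a_1) \cap N(a_2) \cap N_B(x)$ with small $d_B(b_1)$, followed by $b_2 \in N(a_2) \cap N(a_3) \cap N_B(x)$ satisfying $b_2 \not\sim b_1$ and having small $d_B(b_2)$, and finally $b_3 \in N(a_3) \cap N(a_1) \cap N_B(x)$ with $b_3 \not\sim b_1$ and $b_3 \not\sim b_2$. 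At every step the ambient common neighborhood has size $\Omega(n)$ by the Bonferroni estimate, while the forbidden/exceptional conditions discard only $O(\gamma n)$ vertices, so a valid choice always remains.

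The six possible short chords of the cycle are precisely the within-$A$ pairs $a_i a_j$ and the within-$B$ pairs $b_i b_j$; the pairwise non-adjacency of the $a_i$'s and the $\not\sim$ conditions on the $b_i$'s rule them all out. Hence $\{x\} \cup \{a_1,b_1,a_2,b_2,a_3,b_3\}$ is a sparse $7$-wheel in $G$, and Lemma~\ref{lemma4} finishes the proof. The principal obstacle is purely quantitative: at each greedy stage one must verify that the $O(\gamma n)$ or $O(\gamma n/\beta)$ excluded vertices do not exhaust the $\Omega(n)$ or $\Omega(\beta n)$ candidate pool. This is exactly the role of the inequality $\beta^3 + 512\beta\gamma^2 < 16\beta(\beta+1)(2\beta+1)\gamma$ stipulated in the paper's setup, which keeps $\gamma$ suitably small compared to $\beta$.
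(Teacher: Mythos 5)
Your strategy---greedily assembling a sparse $7$-wheel centred at $x$, with the $6$-cycle alternating between $N_A(x)$ and $N_B(x)$, and then invoking Lemma~\ref{lemma4}---is a genuinely different route from the paper's, which instead extracts many \emph{disjoint} independent triples inside $N_A(x)$ and inside $B$ and finds a fully joined pair by counting missing edges against the stability bound $\gamma n^2$. Unfortunately your version has a load-bearing gap at its very first step: the claim that ``by the max-cut property of $(A,B)$, $d_B(x)\geq d_A(x)$'', from which you deduce $d_B(x)>n/4$. The partition in Lemma~\ref{lm3} is required to have parts of as equal sizes as possible, so you cannot move a single vertex across the cut; you can only swap a pair $u\in A$, $v\in B$, and the resulting inequality $d_A(u)-d_B(u)\leq d_A(v)-d_B(v)$ (up to a correction for the edge $uv$) does not give $d_B(x)\geq d_A(x)$ for an individual vertex. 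In fact a vertex adjacent to almost all of $A$ and to almost nothing in $B$ is perfectly compatible with $(A,B)$ being a balanced maximum cut: swapping it with a typical $v\in B$ (which has $d_A(v)\approx n/2$ and $d_B(v)\approx 0$) does not increase the cut. Since your entire selection of $b_1,b_2,b_3$ takes place inside $N_B(x)$, and your Bonferroni bound $|N(a_i)\cap N(a_j)\cap N_B(x)|\geq d_B(a_i)+d_B(a_j)+d_B(x)-2|B|$ is positive only when $d_B(x)$ exceeds roughly $C\gamma n/\beta$, the construction collapses whenever $d_B(x)$ is small, and nothing you have written excludes that case.

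A secondary, repairable inaccuracy: you assert that each ``small within-part degree'' condition discards only $O(\gamma n)$ vertices. By Markov applied to $e(G[B])\leq\gamma n^2$, the set of vertices of $B$ with $d_B\geq C\gamma n$ can be as large as $2n/C$, a constant fraction of $n$, not $O(\gamma n)$; since your candidate pools have size only about $n/4$, these constants genuinely matter and must be checked (they do work out if the threshold is taken to be, say, $16\gamma n$). Relatedly, the inequality $\beta^3+512\beta\gamma^2<16\beta(\beta+1)(2\beta+1)\gamma$ was chosen to make the paper's disjoint-triple count in (\ref{bound1}) work, not to control your greedy losses, so you would need to state and verify the parameter inequalities appropriate to your own argument.
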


\begin{proof}
Without loss of generality, suppose there exists vertex $x\in A$ such that $d_A(x)\geq \beta n$.  Obviously $e(G)> \frac{n^2}{4}-\epsilon n^2$, for any $\epsilon>0$. Thus by the stability theorem, $|E(G)\Delta E(T_{n,2})|\leq \gamma n^2$.


Let $A_x$ be the graph induced by the vertices $N_A(x)\cup \{x\}$ in $A$. Hence, we have $e(A_x)\leq \gamma n^2$, which results in $\sum\limits_{y\in V(A_x)}d_{A_x}(y)\leq 2\gamma n^2$. The average degree of $A_x$ is

$$\Bar{d}(A_x)\leq \frac{\sum\limits_{y\in V(A_x)}d_{A_x}(y)}{v(A_x)}\leq \frac{2\gamma n^2}{\beta n}=\frac{2\gamma n}{\beta}.$$

Let $X$ be the set of vertices in $A_x$ with degree at least $\frac{4\gamma n}{\beta}$. It can be checked that the size of $X$ is at most $\frac{\beta n}{2}$. Let $Y=V(A_x)-X$. Thus, $|Y|\geq \frac{\beta n}{2}$ and for each $y\in Y$, $d_Y(y)\leq \frac{4\gamma n}{\beta}$. Now we can color $G[Y]$ with $ \frac{4\gamma n}{\beta}$ colors. The average size of the color class in $G[Y]$ is at least $\frac{\left(\beta n\right)/2}{\left(4\gamma n\right)/\left(\beta \right)}=\frac{\beta^2}{8\gamma}\geq 3.$
Thus we obtained at least $\frac{n}{3}\left(\frac{\beta}{2}-\frac{8\gamma}{\beta}\right)$ induced $K_{1,3}$'s in $A_x$(see Figure \ref{sparswheel}.)

Notice that the graph induced by $B$, denoted by $G_B$, contains at most $\gamma n^2$ edges.  The average degree is $\Bar{d}(G_B)\leq 2\gamma n$. With the same argument as given above, we can keep an overwhelming majority of vertices in $B$ whose degree is at most $4\gamma n$. Indeed, deleting vertices in $B$ whose degree is at least $4\gamma n$, we are left with at least $\frac{n}{4}$ vertices. Let $Z$ be the set of vertices remaining in $B$ after deleting the vertices. We color $G[Z]$ with $4\gamma n$ colors. The average size of the color class in $G[Z]$ is at least $\frac{n/2}{4\gamma n}$. This implies that we can find at least $\frac{1}{3}\left(\frac{n}{4}-2\times 4\gamma n\right)=\frac{n}{3}\left(\frac{1}{4}-8\gamma\right)$ induced triples in $G_B$ (see Figure \ref{sparswheel}.)
\begin{figure}[h]
\centering
\begin{tikzpicture}[scale=0.2]
\draw[thick](0,0)--(0,-5)(0,0)--(-2,-5)(0,0)--(2,-5)(0,0)--(-6,-5)(0,0)--(-8,-5)(0,0)--(-10,-5) (0,0)--(10,-5)(0,0)--(12,-5)(0,0)--(14,-5);
\draw[thick, blue](0,-5)--(10,-25)--(-2,-5)--(12,-25)--(2,-5)--(14,-25)--(0,-5)(10,-25)--(0,0)--(14,-25)(0,0)--(12,-25)(-2,-5)--(14,-25)(0,-5)--(12,-25)(2,-5)--(10,-25);
\draw[dashed, green](-12,-8)--(-12,-2)--(16,-2)--(16,-8)--(-12,-8);
\draw[dashed, green](-12,-28)--(-12,-22)--(16,-22)--(16,-28)--(-12,-28);
\draw[rotate around={0:(0,-5)},red] (0,-5) ellipse (18 and 7);
\draw[rotate around={0:(0,-5)},blue] (0,-5) ellipse (3.5 and 1);
\draw[rotate around={0:(-8,-5)},blue] (-8,-5) ellipse (3.5 and 1);
\draw[rotate around={0:(12,-5)},blue] (12,-5) ellipse (3.5 and 1);
\draw[rotate around={0:(0,-25)},red] (0,-25) ellipse (18 and 7);
\draw[rotate around={0:(0,-25)},blue] (0,-25) ellipse (3.5 and 1);
\draw[rotate around={0:(-8,-25)},blue] (-8,-25) ellipse (3.5 and 1);
\draw[rotate around={0:(12,-25)},blue] (12,-25) ellipse (3.5 and 1);
\draw[fill=black](0,0)circle(10pt);
\draw[fill=black](0,-5)circle(10pt);
\draw[fill=black](-2,-5)circle(10pt);
\draw[fill=black](2,-5)circle(10pt);
\draw[fill=black](-6,-5)circle(10pt);
\draw[fill=black](-8,-5)circle(10pt);
\draw[fill=black](-10,-5)circle(10pt);
\draw[fill=black](6,-5)circle(3pt);
\draw[fill=black](7,-5)circle(3pt);
\draw[fill=black](5,-5)circle(3pt);
\draw[fill=black](10,-5)circle(10pt);
\draw[fill=black](12,-5)circle(10pt);
\draw[fill=black](14,-5)circle(10pt);
\draw[fill=black](0,-25)circle(10pt);
\draw[fill=black](-2,-25)circle(10pt);
\draw[fill=black](2,-25)circle(10pt);
\draw[fill=black](-6,-25)circle(10pt);
\draw[fill=black](-8,-25)circle(10pt);
\draw[fill=black](-10,-25)circle(10pt);
\draw[fill=black](6,-25)circle(3pt);
\draw[fill=black](7,-25)circle(3pt);
\draw[fill=black](5,-25)circle(3pt);
\draw[fill=black](10,-25)circle(10pt);
\draw[fill=black](12,-25)circle(10pt);
\draw[fill=black](14,-25)circle(10pt);
\node at (20,-5) {$A$};
\node at (20,-25) {$B$};
\node[green] at (-14,-25) {$Z$};
\node[green] at (-14,-5) {$Y$};
\end{tikzpicture}
\caption{A sparse $7$-wheel.}
\label{sparswheel}
\end{figure}
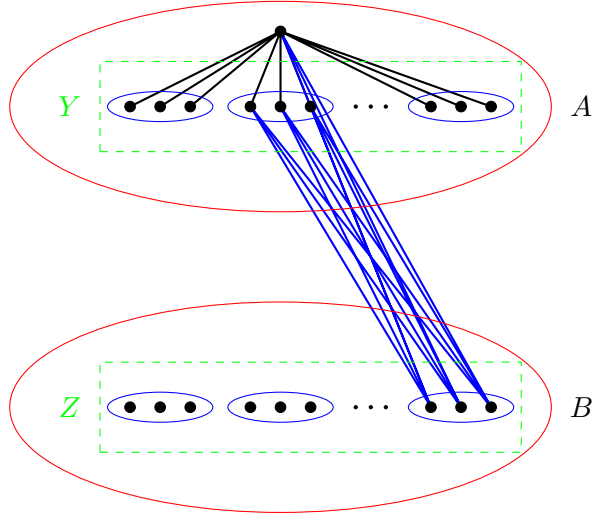

If for each pair of induced $K_{1,3}$ and induced triples obtained in $A$ and $B$ respectively, there is a missing edge, then the number of missed edges is at least $\frac{n}{3}\left(\frac{\beta}{2}-\frac{8\gamma}{\beta}\right)\times \frac{n}{3}\left(\frac{1}{4}-8\gamma\right)$. However if this is greater than $\gamma n^2$, it is a contradiction.  Hence we need the following in-equation to be true:
\begin{equation}\label{bound1}
    \frac{n}{3}\left(\frac{\beta}{2}-\frac{8\gamma}{\beta}\right)\times \frac{n}{3}\left(\frac{1}{4}-8\gamma\right)<\gamma n^2.
\end{equation}
It follows from the definition of $\beta$ and $\gamma$.  Thus there must be an induced $K_{1,3}$ in $A$, which is joined completely to an induced triple of vertices in $B$. Therefore, we get a sparse $7$-wheel. Therefore, $G$ contains a $TP_3$ by Lemma \ref{lemma4}.
\end{proof}

\begin{corollary}
Let $G$ be a graph on $n$ vertices, where $n\geq \frac{5n_0(\gamma)}{2\delta}$, and $e(G)> \frac{n^2}{4}+(1+\delta)n$.  Let $A$ and $B$ be a partition of $V(G)$ with size as equal as possible and with maximum $e(A,B)$.  If $A$ or $B$ has a spider graph as a subgraph, then $G$ contains $TP_3$ as a subgraph.
\end{corollary}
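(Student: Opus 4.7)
The plan is to reduce this corollary to a single application of Lemma \ref{lemma4} by constructing a sparse $7$-wheel out of the given spider. Without loss of generality suppose the spider lies in $A$, and read it as supplying a center $x \in A$ together with three pairwise non-adjacent neighbors $y_1, y_2, y_3 \in A$; this is exactly the induced $K_{1,3}$ configuration harvested from the degree condition in the proof of Lemma \ref{lm3}, and it is also the ``legs-of-the-spider'' picture natural in this setting. By Lemma \ref{lm3} applied contrapositively, we may further assume $d_A(v), d_B(v) < \beta n$ for every vertex $v$, since otherwise we are already done. Combined with the ``good'' minimum-degree bound $d(v) > v(G)/2 + 1$, this shows that each of $x, y_1, y_2, y_3$ has at least $|B| - \beta n$ neighbors in $B$, so
$$|N_B^*(x, y_1, y_2, y_3)| \geq |B| - 4\beta n \geq \left(\tfrac{1}{2} - 4\beta\right)n.$$

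Next I locate three pairwise non-adjacent vertices $z_1, z_2, z_3$ inside this common neighborhood. Since $e(G) \geq n^2/4 - o(n^2)$, the Erd\H{o}s--Simonovits stability theorem (Theorem \ref{stability}) forces $e(G[B]) \leq \gamma n^2$, hence the same bound holds for the induced subgraph on $N_B^*(x, y_1, y_2, y_3)$. Running the averaging and proper-colouring argument from Lemma \ref{lm3} inside this $\Theta(n)$-vertex subset of $B$ yields an independent triple, provided the parameter inequality $\beta^3 + 512\beta\gamma^2 < 16\beta(\beta+1)(2\beta+1)\gamma$ holds — and this is one of the very relations that has been imposed on $\beta$ and $\gamma$ at the outset of the argument.

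With $\{z_1, z_2, z_3\}$ in hand I assemble the sparse $7$-wheel: take the $6$-cycle $y_1 z_1 y_2 z_2 y_3 z_3 y_1$, which lies in the complete bipartite graph $K_{3,3}$ between the $y_i$ and the $z_j$ (each such edge is present because $z_j \in N_B(y_i)$), and let $x$ be the hub (adjacent to every $y_i$ by the spider and to every $z_j$ since $z_j \in N_B(x)$). The distance-$2$ pairs on this cycle are exactly $\{y_i, y_j\}$ and $\{z_i, z_j\}$, all non-edges by the induced choice of the spider and by the independent choice of the $z_j$'s. So the $7$-wheel is sparse, and Lemma \ref{lemma4} delivers a $TP_3$ in $G$. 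The main obstacle in the plan is the middle step — ensuring the independent triple in the $B$-side common neighborhood — but the parameter scheme of the paper has been calibrated precisely so that this estimate falls out of stability; the final assembly of the $7$-wheel is then purely combinatorial.
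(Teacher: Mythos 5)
Your construction has a genuine gap at the point where you declare the $7$-wheel sparse. The corollary only assumes that $A$ \emph{contains a spider as a subgraph}; it does not say the spider is induced. You read the spider as ``a center $x$ together with three pairwise non-adjacent neighbors $y_1,y_2,y_3$,'' but nothing in the hypothesis forbids edges among the neighbors of the center (or, for that matter, edges between a leg's inner and another leg's outer vertex). In your $6$-cycle $y_1z_1y_2z_2y_3z_3$ the distance-two pairs include the pairs $\{y_i,y_j\}$, and if any $y_iy_j\in E(G)$ the wheel is not sparse and Lemma \ref{lemma4} does not apply. Since you discard the depth-two vertices $u_2,v_2,w_2$ of the spider entirely, your argument is really an argument about an \emph{induced} $K_{1,3}$ in $A$ --- which is exactly the configuration Lemma \ref{lm3} manufactures from a large internal degree, but it is not what the corollary hands you. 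The remaining steps (the common neighborhood of size at least $(\tfrac12-4\beta)n$ in $B$, the independent triple inside it via the stability bound $e(G[B])\le\gamma n^2$, and the assembly of the wheel) are fine, but they cannot start without the non-adjacency of $y_1,y_2,y_3$, and there is no obvious local repair: the center of the spider may have $d_A(x)$ as small as $3$, so you cannot simply re-select independent neighbors.

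The paper's proof avoids this issue entirely and uses the full spider. It considers the three $4$-sets $\{x,u_1,u_2,v_1\}$, $\{x,v_1,v_2,w_1\}$, $\{x,w_1,w_2,u_1\}$ from Figure \ref{spider2}: if three distinct vertices of $B$ can be chosen, one complete to each $4$-set, a $TP_3$ appears immediately; otherwise some $4$-set has no common neighbor in $B$, so by averaging one of its vertices $z$ satisfies $d_B(z)\le \tfrac{3n}{8}$, whence the minimum-degree condition forces $d_A(z)\ge \tfrac{n}{8}\ge\beta n$ and Lemma \ref{lm3} finishes. That route needs no independence inside the spider, which is precisely why the spider (rather than a $K_{1,3}$) is the right hypothesis. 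If you want to salvage your plan, you would have to first show that a spider subgraph together with the standing assumption $d_A(v)<\beta n$ yields an induced $K_{1,3}$ in $A$, or else fall back on the paper's averaging reduction.
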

\begin{proof}
Let $S$ denote the spider graph as denoted in Figure \ref{spider2}. Without loss of generality, Suppose $S\subseteq G[A]$.
\begin{figure}[h]
\centering
\begin{tikzpicture}[scale=0.2]
\draw[thick](0,0)--(5,-5)--(5,-10)(0,0)--(0,-5)--(0,-10)(0,0)--(-5,-5)--(-5,-10);
\draw[fill=black](0,0)circle(12pt);
\draw[fill=black](5,-5)circle(12pt);
\draw[fill=black](5,-10)circle(12pt);
\draw[fill=black](0,-5)circle(12pt);
\draw[fill=black](0,-10)circle(12pt);
\draw[fill=black](-5,-5)circle(12pt);
\draw[fill=black](-5,-10)circle(12pt);
\node at (0,1.5) {$x$};
\node at (7,-5) {$w_1$};
\node at (7,-10) {$w_2$};
\node at (2,-5) {$v_1$};
\node at (2,-10) {$v_2$};
\node at (-7,-5) {$u_1$};
\node at (-7,-10) {$u_2$};
\end{tikzpicture}
\caption{A spider graph with three legs and one joint.}
\label{spider2}
\end{figure}
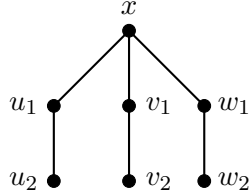

We consider $4$-vertex subsets of $S$, namely $\{x,u_1,u_2,v_1\},\{x,v_1,v_2,w_1\}$ and $\{x,w_1,w_2,u_1\}$.  Note that, if we can find $3$ distinct vertices in $B$ such that, one of them is connected to all the vertices in the above subsets, we immediately find a $TP_3$.  Without loss of generality, assume that the $4$-set $\{x,u_1,u_2,v_1\}$ does not have a common vertex in $B$.  In other words, for every vertex $y\in B$, $y$ is not adjacent to at least one of the vertices in $\{x,u_1,u_2,v_1\}$.  Note that, the average degree of vertices in $\{x,u_1,u_2,v_1\}$ is $\frac{3n}{8}$.  So there exists a vertex $z\in \{x,u_1,u_2,v_1\}$, such that $d_{B}(z)\leq\frac{3n}{8}$.  The minimum degree of the vertices in $G$ is at least $\frac{n}{2}$, thus $d_{A}(z)\geq \frac{n}{8}$.

So we have this large degree vertex in $A$ and are done by the Lemma \ref{lm3}.

\begin{claim}
Given a graph $G_k$ on $k$ vertices, with $2k$ edges.  We can find an independent set of vertices with size $\frac{3k}{55}$.
\end{claim}
\begin{proof}
Say we delete vertices with degrees greater than $10$. Denote the remaining graph  with $G'$.  The number of vertices deleted is denoted by $l$.  The sum of the degrees is at least $10l$.  Thus the number of edges deleted is at least $5l$.  We already know the number of edges in the graph is $2k$, hence $l\leq \frac{2k}{5}$.  Then in $G'$, every vertex has degree at most $10$.  Start by choosing an arbitrary vertex $x\in G'$, delete its neighbors, and continue choosing another vertex in the graph $G'\setminus N(x)$.  With this recursive procedure, we can get an independent set of size $\frac{3k}{55}$. 
\end{proof}

\begin{claim}
  Let $G$ be a graph on $n$ vertices, where $n\geq \frac{5n_0(\gamma)}{2\delta}$. Let $A$ and $B$ a be partition of $V(G)$ with size as equal as possible and with maximum $e(A,B)$. Let $e(A)\geq \frac{n}{2}+\delta \frac{n}{2}$, then the total number of triples of vertices we can find such that they are in $K_{1,3}$'s or induced $k_{1,3}$'s (which are a subgraph of a huge star, with center vertex having degree at-least $84$) is at-least $\frac{\delta n}{664}$.
\end{claim}
\begin{proof}
The degree sum of vertices in $A$ is greater than or equal to $2(\frac{n}{2}+\delta\frac{n}{2})$.  Hence we have vertices that have degree at least $2$.

Let $v$ be a vertex in $A$ such that $d_A(v)=\Delta$.  Let $A_v$ be the graph induced by the vertices $\{v\}\cup N_A(v)$. Note, $A_v$ doesn't contain the spider graph as a subgraph.  We consider the following cases:\\
\textbf{Case $1$: $\Delta\leq 83$.}

Let $x_1,x_2$ and $x_3$ be in $N(v)$.  The vertices $v,x_1,x_2$ and $x_3$ form a $K_{1,3}$.  On deletion of these $4$ vertices, we have deleted at most $332$ edges. Note that $332$ is negligible compared to the number of extra edges in $A$, which was $\delta \frac{n}{2}$. Hence the number of $K_{1,3}$'s we can find is at least $\frac{\delta n}{664}$.\\
\textbf{Case $2$:  $\Delta>84$.}

Denote the vertices in $N(v)$ with $x_i$.  Note that we do not have $3$ independent edges going out of $G_A(v)$ from $x_i$'s, as we have a spider-free graph.  Let $x_1,x_2$, and $x_3$ be vertices degree greater than $2$.  Then by Halls Theorem, we immediately get a matching and $3$ independent edges going from the set $G_A(v)$ to $A\setminus G_A(v)$.  Thus we have at-most $2$ vertices in the set $\{x_i\}$, who have degree greater than $2$.  Thus the number of edges incident to $G_A(v)$ is at most $2(\Delta-1)+2(\Delta-2)+2\Delta\leq 6\Delta.$  

By the previous lemma, in the graph induced by the set of vertices $x_i$, we can find an independent set of size at least $\frac{3\Delta}{55}$.  Hence we can find at least $\frac{\Delta}{55}$ triples such that it forms an induced $K_{1,3}$ with $v$ being the center. The number of $K_{1,3}$'s we can find is at least $\frac{\delta n}{660}$.
\end{proof}

We want to prove $\ex(n,TP_3)\leq \frac{1}{4}n^2+(1+\delta)n$.  Assume that there is a $TP_3$-free graph that has more than $\frac{1}{4}n^2+(1+\delta)n$ edges.  Then one of the bi-partitions has to have more than $\frac{n}{2}+\frac{\delta n}{2}$ edges.  In the next lemma, we show that this is not possible.
\begin{lemma}
Let $G$ be a graph on $n$ vertices, where $n\geq \frac{5n_0(\gamma)}{2\delta}$. Let $A$ and $B$ be partition of $V(G)$ with size as equal as possible and with maximum $e(A,B)$.  Assume that, neither $A$ nor $B$ contains a spider graph as a subgraph and the maximum degree of vertices inside each of the class is $\beta n$. Say $e(A)\geq\frac{n}{2}+\frac{\delta n}{2}$, then $G$ contains a $TP_3$.
\end{lemma}\begin{proof}

By the previous lemma, we have the total number of triples either in $K_{1,3}$'s or induced $K_{1,3}$'s (which are a subgraph of a star, with the center vertex of degree at least $84$) is $\frac{\delta n}{664}$.  Let us consider two cases:
\subsection*{Case $1$:  Half of the triples lie in disjoint $K_{1,3}$'s.}
Consider a vertex $x\in B$.  We know that the maximum degree on $x$ inside $B$ is less than equal to $\beta n$.  So $x$ has at most $\beta n$ non-neighbors in $A$. Thus are at-least $\frac{\delta n}{1328}-\beta n$ triples in disjoint $K_{1,3}$, such that all four of the vertices in the $K_{1,3}$ are adjacent to $x$.  Consider three independent edges in $B$, namely $y_1z_1,y_2z_2$ and $y_3z_3$.  For each of these $6$ vertices, we can find at least $\frac{\delta n}{1328}-\beta n$ triples in disjoint $K_{1,3}$, such that the vertices of the $K_{1,3}$ are joined completely to the given vertex.   Then each of the vertices $y_i$ (similarly $z_i$) is completely connected to all the vertices of at least $\frac{6}{7}$ triples of disjoint $K_{1,3}$ in $A$.  In other words, we need the following in-equation to be true.
\begin{equation}\label{bound2}
    \frac{\delta n}{1328}-\beta n\geq \frac{6}{7}\times\frac{\delta n}{1328}.
\end{equation}
This holds by the definition of $\beta$.  Thus by the Pigeon-hole principle, we have a common triple, such that these $3$ independent edges are connected to it completely.  Denote the vertices of this triple as $x_1,x_2$ and $x_3$.  The vertices $x_1,y_1,x_2y_2$ and $x_3y_3$ along with $x$ form a $7$-wheel.  The triangles $x_1y_1z_1,x_2y_2z_2$, and $x_3y_3z_3$ sitting on the $7$-wheel form a $TP_3$.
\subsection*{Case $2$: Half of the triples lie in induced $K_{1,3}$'s.}
  Let the number of induced $K_{1,3}$'s in each of these stars be $k_i$.  Note that, summing $k_i$ over all the vertices in $A$ which have degree at least $84$, is at least $\frac{\delta n}{1328}$.  Consider the center of one such star in $A$, say $x$. The maximum degree of $x$ in $A$ is less than equal to $\beta n$.  Hence $x$ can have at most $\beta n$ non-neighbors in $B$.  Delete these vertices in $B$ and denote the graph remaining with $B'$. We know that $\Delta(B')\leq \beta n$.    Hence we can color it with $\beta n$ colors and each color class is of size at most $\frac{1}{2\beta}-1$.  Hence we can choose $\frac{\frac{n}{2}-3\beta n}{3}$ independent triples.  Each of these triples must have a missing edge to the root vertices in the $K_{1,3}$ chosen in $A$, otherwise, we are done.  Hence the number of missing edges is equal to $k_1\times \frac{\frac{n}{2}-3\beta n}{3}$.  Summing this over vertices in $A$ with degree at least $25$, we get $\frac{\delta n}{1328}\times \frac{\frac{n}{2}-3\beta n}{3}$.  This can't be bigger than the possible number of missing edges $\gamma n^2$.
  This gives us the following in-equation
  \begin{equation}\label{bound3}
      \frac{\delta n}{1328}\times \frac{\frac{n}{2}-3\beta n}{3}<\gamma n^2,
  \end{equation}
  which holds by definition.  Hence we find a sparse $7$-wheel and we are done.
\end{proof}
\end{proof}

\section{Concluding remarks and conjectures}
Following the two constructions given in Figure \ref{fig1} and Figure \ref{fig2}, we pose the following conjecture concerning $\text{ex}(n, TP_3)$.  
\begin{conjecture}\label{con3}
\begin{equation*}\text{ex}(n,TP_3)\leq
\begin{cases}
\frac{1}{4}n^2+n+1, &\text{if $n$ is even,}\\
\frac{1}{4}n^2+n+\frac{3}{4},&\text{ otherwise}.
\end{cases}
\end{equation*}
\end{conjecture}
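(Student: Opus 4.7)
The plan is to tighten every step of the proof of Theorem~\ref{maintheorem} so that the $o(n)$ error is replaced by an explicit small constant. Suppose for contradiction that $G$ is an $n$-vertex $TP_3$-free graph whose edge count strictly exceeds the conjectured bound. By Theorem~\ref{maintheorem} we may already assume $e(G) = \tfrac{n^2}{4} + n + O(1)$, so $G$ is $o(n^2)$-close to $T_2(n)$; applying Theorem~\ref{stability} together with Lemmas~\ref{lemma2}--\ref{lm3} yields a bipartition $(A,B)$ of sizes $\lceil n/2 \rceil$ and $\lfloor n/2 \rfloor$ (possibly shifted by one to maximize $e(A,B)$), with no spider inside $G[A]$ or $G[B]$ and with bounded internal maximum degree. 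The aim is to pin down $G[A]$ and $G[B]$ exactly, not merely bound them.

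The central structural claim is that $e(G[A]) + e(G[B]) \leq 2\max(|A|,|B|)$, with equality forcing the larger side to be a vertex-disjoint union of copies of $K_5$ and the smaller side to be independent. The key sub-lemma is that no connected component $C$ of $G[A]$ (or $G[B]$) satisfies $e(C) > 2|C|$. The hardest case is $C \cong K_6$: since $G$ is close to $T_2(n)$, the six vertices of such a $K_6$ share at least $\tfrac{n}{2} - o(n) \geq 4$ common neighbors $b_1, \dots, b_4$ in $B$, and one can then embed $TP_3$ into $G[\{a_1, \dots, a_6, b_1, \dots, b_4\}]$ explicitly, e.g.\ via
\[
x_1^1 = b_1,\ x_1^2 = a_5,\ x_2^2 = a_1,\ x_1^3 = a_2,\ x_2^3 = b_2,\ x_3^3 = a_6,\ x_1^4 = b_3,\ x_2^4 = a_3,\ x_3^4 = a_4,\ x_4^4 = b_4,
\]
so that all eighteen $TP_3$-edges are either $K_6$-edges or $A$--$B$-edges. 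A small perturbation of this embedding handles $K_6 - e$ and $K_6$ minus a matching, and hence every $6$-vertex component with $e(C) \geq 13$. The remaining cases ($|C| \geq 7$ with $e(C) > 2|C|$) are largely killed by the spider-free conclusion together with routine variants of the above embedding.

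Next, the bound $e(A) + e(B) \leq 2 \max(|A|,|B|)$ combined with $e(A,B) \leq |A| \cdot |B|$ reduces the problem to a small discrete optimization over the imbalance $|A| - |B|$ and the residue of $|A|$ modulo $5$. A direct finite check shows the maximum is $\tfrac{n^2}{4} + n + 1$ for $n$ even, attained at $|A| = \tfrac{n}{2} + 1$ divisible by $5$, and $\tfrac{n^2}{4} + n + \tfrac{3}{4}$ for $n$ odd, attained at $|A| = \tfrac{n+1}{2}$, matching Figures~\ref{fig1} and \ref{fig2}.

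The principal obstacle is the structural sub-lemma. The sparse $7$-wheel argument of Lemma~\ref{lemma4} handles spread-out excess, but for a tightly localized cluster one must embed $TP_3$ directly, and many natural attempts fail because some required $TP_3$-edge would be forced to sit inside the (almost) independent class $B$. The explicit embedding above succeeds for $K_6$ (and for $K_6-e$ by routing the missing edge onto a $TP_3$-non-edge) essentially because the ``$A$-side'' of $TP_3$ can be realised as a $6$-cycle inside a dense component, but verifying analogous embeddings for every remaining small dense configuration will be the most delicate part of the proof. A secondary difficulty is the parity-and-divisibility bookkeeping needed to produce the sharp constants $+1$ versus $+\tfrac{3}{4}$, which forces tracking $n \pmod{10}$ throughout the final optimization.
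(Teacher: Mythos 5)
The statement you are proving is posed in the paper explicitly as Conjecture~\ref{con3}; the paper offers no proof of it (it only establishes the weaker asymptotic $\ex(n,TP_3)=\frac{n^2}{4}+n+o(n)$ via Theorem~\ref{mainthm}), so there is no argument of the authors to compare yours against. Judged on its own, your proposal is a programme rather than a proof, and its central step has a genuine gap. Your key structural claim is $e(G[A])+e(G[B])\leq 2\max(|A|,|B|)$, but the sub-lemma you actually argue for --- that no component $C$ of $G[A]$ or of $G[B]$ has $e(C)>2|C|$ --- only yields $e(G[A])\leq 2|A|$ and $e(G[B])\leq 2|B|$ \emph{separately}, hence $e(G[A])+e(G[B])\leq 2n$, which gives the useless bound $\frac{n^2}{4}+2n+O(1)$. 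The entire difficulty of the conjecture is precisely to show that the two sides cannot \emph{both} carry a linear excess of internal edges (note Figure~\ref{fig3}, where both classes carry disjoint triangles and the total is already $\frac{n^2}{4}+n$); you give no mechanism ruling out, say, disjoint unions of $K_5$'s on both sides simultaneously, and nothing in your component analysis sees the interaction between the two classes. The equality analysis ("equality forces disjoint $K_5$'s on one side and independence on the other") is likewise asserted, not derived, and the phrase "the remaining cases are largely killed by" covers exactly the configurations where such claims typically fail.

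A secondary problem is that the tools you propose to import cannot deliver constants. Theorem~\ref{mainthm} only gives $e(G)\leq\frac{n^2}{4}+(1+\delta)n$, so after assuming the conjecture fails you know the excess is between $1$ and $o(n)$, not $O(1)$. More importantly, the minimum-degree and common-neighborhood conditions you invoke (e.g.\ four common neighbors in $B$ for the six vertices of a $K_6$ in $A$) come from the good-subgraph reduction of Lemma~\ref{lemma2}, which passes to a subgraph on $n-m$ vertices with $m$ possibly linear in $n$; this step is exactly why the paper loses the constant and lands at $o(n)$. Your $K_6$ embedding itself checks out --- it is the sparse-$7$-wheel configuration of Lemma~\ref{lemma4} in disguise, a $6$-cycle in $A$ with one full common neighbor and three matched common neighbors in $B$ --- but that is the part of the argument the paper already has. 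The parts that would actually close the conjecture (the joint bound on $e(G[A])+e(G[B])$ and the exact extremal structure) are missing.
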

We also pose the following conjecture related to $\text{ex}(n,TP_4)$. 
\begin{conjecture}\label{con1}
For $n$ sufficiently large, $\text{ex}(n,TP_4)=\frac{n^2}{4}+\Theta(n^{4/3})$.
\end{conjecture}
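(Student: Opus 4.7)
The plan is to establish matching lower and upper bounds of order $n^{4/3}$ by exploiting a direct structural link between $TP_4$ and the $6$-cycle. The key observation is that $TP_4$ has, up to permutation of colours, a unique proper $3$-colouring: assigning $x_1^1\mapsto 1$ and propagating the two-colour choice at each successive layer forces $x_2^3,x_1^4,x_4^4,x_3^5$ all into colour $1$, and a routine check shows that for any pair of the three resulting colour classes the induced subgraph of $TP_4$ is a $6$-cycle with two length-$2$ pendant paths hanging from two adjacent cycle vertices. In particular, any graph obtained from $T_2(n)$ by adding a triangle-free and $C_6$-free graph inside one part will remain $TP_4$-free.

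For the lower bound $\ex(n,TP_4)\ge \frac{n^2}{4}+\Omega(n^{4/3})$, I would take $G=T_2(n)\cup H$ with $H$ a graph of girth at least $7$ placed inside one part $A$ --- for instance, (a subgraph of) the incidence graph of the generalised quadrangle $GQ(q,q)$ --- giving $\Theta(n^{4/3})$ additional edges and no cycles of length $3,4,5,6$. To verify $G$ is $TP_4$-free, any embedding $\phi\colon TP_4\hookrightarrow G$ induces a partition $V(TP_4)=V_A\sqcup V_B$ with $V_B$ independent in $TP_4$ (since $G$ has no internal edges on the $B$-side), and a finite case-check on the $15$-vertex graph $TP_4$ shows that every such $V_B$ forces $TP_4[V_A]$ to contain either a triangle or a $6$-cycle, contradicting the girth of $H$. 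Concretely, size-$5$ independent transversals are, up to the $3$-fold rotational symmetry, the three colour classes, whose complements are the $C_6+\text{pendants}$ graph identified above; size-$6$ independent sets such as the corners of the three odd layers leave a $TP_4[V_A]$ containing several edge-disjoint triangles; and smaller independent sets yield still more internal structure.

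For the upper bound $\ex(n,TP_4)\le \frac{n^2}{4}+O(n^{4/3})$, the plan parallels the proof of Theorem~\ref{maintheorem}: apply Theorem~\ref{stability} to produce a bipartition $(A,B)$ maximising $e(A,B)$, then iteratively delete low-degree and otherwise-bad vertices as in Lemmas~\ref{lemma2} and~\ref{lm3} until one reaches a ``good'' subgraph in which every cross-edge has $(1-o(1))\tfrac{n}{2}$ common neighbours on the other side. If $|E(G[A])|+|E(G[B])|\le Cn^{4/3}$ we are done; otherwise say $|E(G[A])|>\tfrac{C}{2}n^{4/3}$, and the Bondy--Simonovits theorem $\ex(m,C_6)=O(m^{4/3})$ produces a $6$-cycle in $G[A]$. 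The task is then to extend this $C_6$ to a copy of $C_6+\text{pendants}$ inside $G[A]$ and to realise the five colour-$1$ vertices of $TP_4$ inside $B$ via a pigeonhole / common-neighbourhood argument akin to Lemma~\ref{lemma4}. The principal obstacle is producing the pendant edges: extremal $C_6$-free graphs have maximum degree only about $n^{1/3}$, so not every $6$-cycle admits pendant extensions, and completion will likely require a supersaturation argument --- once $|E(G[A])|$ exceeds the $C_6$-Tur\'an threshold by a constant factor, one can select a $C_6$ whose six vertices all have sufficiently many further $G[A]$-neighbours --- or a re-optimisation of the bipartition of $V(TP_4)$ that places some pendant vertices inside $B$ at the cost of a few extra $G[B]$-edges.
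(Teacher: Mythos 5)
The statement you are addressing is posed in the paper only as a conjecture: the paper establishes just the lower bound $\ex(n,TP_4)\geq \frac{n^2}{4}+\Omega(n^{4/3})$ (Claim~\ref{yesclaim} and Lemma~\ref{lemma9}) and leaves the upper bound open. Your lower-bound argument is essentially the paper's. Both rest on the same structural fact --- deleting any independent set from $TP_4$ leaves a graph containing a $C_3$ or a $C_6$ --- and both add a graph with $\Theta(n^{4/3})$ edges and no $C_3$ or $C_6$ to one side of $T_2(n)$; your girth-$8$ incidence graph of $GQ(q,q)$ is bipartite and $C_6$-free, so it is literally an instance of the paper's construction. Where you differ is in how the key fact is verified. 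The paper's proof is two lines: if colour $2$ carries no triangle, then some vertex of the central triangle $x_2^3x_2^4x_3^4$ receives colour $1$, and the six neighbours of that vertex, which induce a $C_6$ in $TP_4$, are all forced into colour $2$. Your exhaustive case-check over independent sets is both heavier and, as sketched, inaccurate: $TP_4$ has independent sets of size $6$ (for instance $\{x_1^1,x_1^3,x_3^3,x_1^5,x_3^5,x_5^5\}$) and $5$-element independent sets that are not colour classes, so ``up to the $3$-fold rotational symmetry'' does not exhaust the cases. The conclusion is still correct, but you should replace the case analysis by the central-triangle argument.

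The genuine gap is the upper bound, and you have located it yourself. Everything after ``apply Theorem~\ref{stability}'' is a plan rather than a proof: after stability and cleaning you would need to find in $G[A]$ not merely a $C_6$ (which Bondy--Simonovits supplies once $e(G[A])$ exceeds $Cn^{4/3}$) but the full $10$-vertex, $10$-edge complement of a colour class --- a $C_6$ with two pendant paths --- positioned so that five prescribed subsets of its vertices have distinct common neighbours in $B$. Since extremal $C_6$-free graphs are nearly regular of degree $\Theta(n^{1/3})$, the supersaturation step you invoke does not obviously produce a $C_6$ admitting the required pendant extensions, and moving pendant vertices into $B$ only changes which subgraph of $TP_4$ must live inside $A$ without removing the need for a $C_6$ there (by Claim~\ref{yesclaim}, every independent set's complement contains a $C_3$ or $C_6$). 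This unresolved step is exactly why the paper records the statement as Conjecture~\ref{con1} rather than as a theorem; your proposal, like the paper, proves the lower bound and leaves the upper bound open.
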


To show the lower bound, we consider an $n$-vertex graph $G$  obtained from a complete bipartite graph with color classes as equal as possible and adding a bipartite $C_6$-free graph with $cn^{4/3}$ edges in one of the color classes. Thus, $e(G)\geq \frac{n^2}{4}+O(n^{4/3})$.  The only thing we need to show is $G$ does not contain a $TP_4$. We need the following claim to show that.
\begin{claim}\label{yesclaim}
Every $2$-coloring of the $TP_4$ such that color 1 is independent, contains either a $C_3$ or a $C_6$ in color 2.
\end{claim}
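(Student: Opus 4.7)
The plan is to exploit the very local structure of $TP_4$ around its degree-$6$ vertices. First I would identify that $TP_4$ has exactly three interior (degree-$6$) vertices, namely $x_2^3$, $x_2^4$, and $x_3^4$, and record two structural facts about them: these three vertices themselves form a triangle in $TP_4$ (the edges $x_2^3 x_2^4$, $x_2^4 x_3^4$, $x_3^4 x_2^3$ are all present as slant and layer-$4$ path edges), and for each of them, the six neighbors can be listed cyclically so that consecutive neighbors are joined by an edge of $TP_4$, so the open neighborhood of each interior vertex contains a $C_6$.

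With these facts in hand, the proof is a one-line dichotomy. Let $S$ denote the color-$1$ vertex class, which by hypothesis is independent. If $S$ meets $\{x_2^3, x_2^4, x_3^4\}$, say $x_2^3 \in S$, then the independence of $S$ forces all six neighbors of $x_2^3$ to lie in color $2$, and by the structural fact they contain a $C_6$; the cases $x_2^4 \in S$ and $x_3^4 \in S$ are symmetric. Otherwise $S$ is disjoint from $\{x_2^3, x_2^4, x_3^4\}$, so all three of these vertices lie in color $2$, and the triangle they span is a $C_3$ in color $2$.

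The only step that requires any work — and which I expect to be the main (though still routine) obstacle — is verifying that the six neighbors of each interior vertex form a $6$-cycle in $TP_4$. This is a direct check tracing the layer-path and slant edges around the vertex; for $x_2^3$, for instance, the cyclic sequence of neighbors is $x_1^3, x_1^2, x_2^2, x_3^3, x_3^4, x_2^4$, with consecutive pairs joined by the edges $x_1^3 x_1^2$, $x_1^2 x_2^2$, $x_2^2 x_3^3$, $x_3^3 x_3^4$, $x_3^4 x_2^4$, $x_2^4 x_1^3$ of $TP_4$, and the verifications for $x_2^4$ and $x_3^4$ are analogous. Since no other configurations arise, these two small cases exhaust the possibilities and yield the claim.
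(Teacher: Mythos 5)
Your proposal is correct and follows essentially the same argument as the paper: both hinge on the triangle formed by the three degree-$6$ interior vertices (the paper's $x_1x_2x_3$, your $x_2^3x_2^4x_3^4$) and the fact that the neighborhood of each such vertex induces a $C_6$, giving the dichotomy monochromatic triangle versus forced $C_6$. If anything, your write-up is more careful, since you explicitly verify the cyclic adjacency of the six neighbors, which the paper takes for granted.
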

\begin{proof}
Consider a $2$-coloring $c$ of a $TP_4$ such that color $1$ is independent. We want to show that there is either a $C_3$ or a $C_6$ in color $2$. Suppose there is no such $C_3$. Then one of the vertices of the triangle $x_1x_2x_3$ (see Figure \ref{qwxs}) is in color $2$. Without loss of generality, let the color of $x_1$ be $1$. Since $c$ is a $2$-coloring with the property that color $1$ is independent, then all the $6$ neighboring vertices of $x_1$ must be of color $2$. Therefore, we obtain a $C_6$ with color $2$ and this completes the proof. 
\begin{figure}[h]
\centering
\begin{tikzpicture}[scale=0.4]
\draw[fill=black](0,0)circle(7pt);
\draw[fill=black](-2,-2)circle(7pt);
\draw[fill=black](2,-2)circle(7pt);
\draw[fill=black](-4,-4)circle(7pt);
\draw[fill=black](0,-4)circle(7pt);
\draw[fill=black](4,-4)circle(7pt);
\draw[fill=black](-6,-6)circle(7pt);
\draw[fill=black](-2,-6)circle(7pt);
\draw[fill=black](2,-6)circle(7pt);
\draw[fill=black](6,-6)circle(7pt);
\draw[fill=black](-8,-8)circle(7pt);
\draw[fill=black](-4,-8)circle(7pt);
\draw[fill=black](0,-8)circle(7pt);
\draw[fill=black](4,-8)circle(7pt);
\draw[fill=black](8,-8)circle(7pt);
\draw[thick](-2,-2)--(2,-2)(-4,-4)--(4,-4)(-6,-6)--(6,-6)(-8,-8)--(8,-8);
\draw[thick](0,0)--(-8,-8)(2,-2)--(-4,-8)(4,-4)--(0,-8)(6,-6)--(4,-8);
\draw[thick](0,0)--(8,-8)(-2,-2)--(4,-8)(-4,-4)--(0,-8)(-6,-6)--(-4,-8);
\node at (0,-3) {$x_2$};
\node at (-3,-6.5) {$x_1$};
\node at (3,-6.5) {$x_3$};
\end{tikzpicture}
\caption{$TP_4$.}
\label{qwxs}
\end{figure}   
\end{proof}
The following lemma is a consequence of Claim \ref{yesclaim} and hence the lower bound of Conjecture \ref{con1} holds.
\begin{lemma}\label{lemma9}
Let $G$ be a graph obtained from a complete bipartite graph $K_{\frac{n}{2},\frac{n}{2}}$ (with color class $1$ and $2$) and a bipartite, $C_6$-free graph to the color class $2$. Then $G$ is a $TP_4$-free graph.   
\end{lemma}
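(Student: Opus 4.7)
The plan is to derive this directly from Claim \ref{yesclaim} together with the structural description of $G$. Suppose for contradiction that $G$ contains a copy of $TP_4$. I will define a $2$-coloring of the vertices of this $TP_4$ according to the two color classes of the base complete bipartite graph: assign color $1$ to each $TP_4$-vertex lying in class $1$, and color $2$ to each $TP_4$-vertex lying in class $2$.

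First I would observe that this $2$-coloring satisfies the hypothesis of Claim \ref{yesclaim}, namely that color $1$ is independent. Indeed, the only edges present in $G$ are (i) all edges between class $1$ and class $2$, and (ii) the edges of the bipartite $C_6$-free graph sitting inside class $2$; in particular there are no edges inside class $1$. Hence any two $TP_4$-vertices receiving color $1$ are non-adjacent in $G$ and thus non-adjacent in the $TP_4$ as well.

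Now I would apply Claim \ref{yesclaim} to conclude that the color-$2$ subgraph of our $TP_4$ contains either a $C_3$ or a $C_6$. In either case the corresponding edges lie entirely inside color class $2$, so they must all be edges of the auxiliary bipartite $C_6$-free graph. If we found a $C_3$, this contradicts the bipartiteness of that auxiliary graph (bipartite graphs are triangle-free). If we found a $C_6$, this directly contradicts the $C_6$-freeness assumption. Either way we reach a contradiction, establishing that $G$ is $TP_4$-free.

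No serious obstacle is expected: the entire argument is a one-line reduction to Claim \ref{yesclaim} once the coloring is set up correctly. The only point deserving care is to make sure that the $2$-coloring of the embedded $TP_4$ is consistently taken from the class partition of $G$ (not from the internal bipartition of class $2$), so that the hypothesis ``color $1$ is independent'' is immediate from the construction of $G$.
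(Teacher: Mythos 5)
Your proof is correct and follows exactly the route the paper intends: the paper gives no written proof of Lemma \ref{lemma9} beyond stating that it is ``a consequence of Claim \ref{yesclaim},'' and your argument---coloring the embedded $TP_4$ by the two classes of $G$, noting class $1$ is independent, and deriving a forbidden $C_3$ (bipartiteness) or $C_6$ ($C_6$-freeness) inside the auxiliary graph---is precisely that deduction, spelled out cleanly.
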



\section*{Acknowledgments}
 The research of first and second  authors was supported by the National Research, Development and Innovation Office NKFIH, grant  K116769, and the  research of second, third, fourth and fifth authors was supported by the National Research, Development and Innovation Office NKFIH, grant K132696.



\begin{thebibliography}{99}
\bibitem{TOMASZ}
T. Dzido,  A. Jastrz\c{e}bski. Tur{\'a}n numbers for odd wheels. Discrete Mathematics. 341 (4) (2018), 1150--1154.

\bibitem{EDR1}
P. Erd\H{o}s and A. H. Stone. On the structure of linear graphs. Bull. Amer. Math. Soc., 52, (1946) 1087--1091 .

\bibitem{EDR2}
P. Erd\H{o}s and M. Simonovits. A limit theorem in graph theory. Studia Sci. Math. Hungar., 1, (1966) 51--57 .

\bibitem{EDR3}
P. Erd\H os, Some recent results on extremal problems in graph theory. (Results), in: Theory of Graphs (Internat. Sympos., Rome, 1966), 1967 


\bibitem{LIU}
H. Liu. Extremal graphs for blow-ups of cycles and trees. The Electron. J. of combin. 20 (1) (2013), P65.

\bibitem{MAN}
W. Mantel: Problem 28, soln. by H. Gouventak, W. Mantel, J. Teixeira de Mattes, F. Schuh and W.A. Wythoff. Wiskundige Opgaven 10, (1907) 60--61 .

\bibitem{XIAO}
C. Xiao, G. Katona, J. Xiao, O. Zamora. The Tur\'an number of the square of a path. arXiv:1912.02726.



\bibitem{turan1941external}
P. Turán, On an extremal problem in graph theory, Matematikai és Fizikai Lapok (in Hungarian), 48(1941) 436–452.
\end{thebibliography}
\end{document}